\documentclass[a4paper,11pt]{amsart} 
\usepackage{amsmath}
\usepackage{amsfonts}
\usepackage{amssymb}
\usepackage{amsthm}
\usepackage{newlfont}
\usepackage{color}
\usepackage[pdftex]{hyperref}
\usepackage{subfig}
\usepackage{bbm}

\usepackage{mathrsfs}
\usepackage{palatino}  
\usepackage{fancyhdr}
\usepackage{graphicx}
\usepackage{geometry}
\usepackage{psfrag}

%

\newcommand{\remark}{\noindent{\it Remark. }}



\newcommand{\caA}{{\mathcal A}}
\newcommand{\caB}{{\mathcal B}}

\newcommand{\caF}{{\mathcal F}}
\newcommand{\caG}{{\mathcal G}}
\newcommand{\scrG}{{\mathcal G}}

\newcommand{\caL}{{\mathcal L}}
\newcommand{\caM}{{\mathcal M}}

\newcommand{\caO}{{\mathcal O}}

\newcommand{\caS}{{\mathcal S}}

\newcommand{\caU}{{\mathcal U}}
\newcommand{\caV}{{\mathcal V}}

\newcommand{\bbC}{{\mathbb C}}

\newcommand{\bbN}{{\mathbb N}}

\newcommand{\bbP}{{\mathbb P}}

\newcommand{\bbR}{{\mathbb R}}

\newcommand{\mfa}{\mathfrak{a}}
\newcommand{\mfb}{\mathfrak{b}}
\newcommand{\tmfa}{\tilde{\mfa}}
\newcommand{\tmfb}{\tilde{\mfb}}

\newcommand{\upm}{^{\caM}}
\newcommand{\sigz}{\Sigma_z}
\newcommand{\sigx}{\Sigma_x}

\newcommand{\ie}{{\it i.e.\/} }

\renewcommand{\Re}{\mathrm{Re}}
\renewcommand{\Im}{\mathrm{Im}}

\newcommand{\eps}{\epsilon}

\newcommand{\dd}{\mathrm{d}}

\newcommand{\Emean}[1]{\mathbb{E}(#1)}

\newcommand{\bra}{\left\langle}
\newcommand{\ket}{\right\rangle}

\newcommand{\beq}{\begin{equation}}
\newcommand{\eeq}{\end{equation}}
\newcommand{\beqn}{\begin{equation*}}
\newcommand{\eeqn}{\end{equation*}}
\newcommand{\baq}{ \begin{eqnarray} }
\newcommand{\eaq}{ \end{eqnarray} }
\newcommand{\mean}[1]{\mathbb{E}\left(#1\right)}
\newcommand{\prob}[1]{\mathbb{P}\left(#1\right)}



\newtheorem{thm}{Theorem}

\newtheorem{lma}[thm]{Lemma}


\begin{document}

\title[DMPK process]
{Existence of a unique strong solution to the DMPK equation}

\author[M. Butz]{Maximilian Butz}
\address{Fakult\"at f\"ur Mathematik \\ Technische Universit\"at M\"unchen \\
Boltzmannstr.~3 \\ 
85748 Garching, Germany}
\email{butz@ma.tum.de}

\date{\today }

\begin{abstract}

For the transmission of electrons in a weakly disordered strip of material Dorokhov, Mello, Pereyra and Kumar (DMPK) proposed a diffusion process for the transfer matrices. The correspoding transmission eigenvalues satisfy the DMPK stochastic differential equations, like Dyson Brownian motion in the context of GOE/GUE random matrices. We control the singular repulsion terms of this SDE with a stopping-time argument, and its degenerate initial condition by an approximation procedure, and thereby establish the DMPK equation to be well posed.


\end{abstract}

\maketitle


\section{introduction}

F. J. Dyson \cite{dyson} introduced an $N\times N$ matrix-valued Brownian motion, to be precise a stochastic process $\left(X_2(t)\right)_{t\geq0}$,

\begin{equation}
\label{dbmmat2}
 X_2(t)=Y_2(t)+Y_2(t)^*,
\end{equation}
with $Y_2(t)$ being an $N\times N$ matrix with all entries independent complex standard Brownian motions.

$X_2(t)$ takes values in the complex hermitian matrices, and is distributed like a scaled GUE matrix for fixed time $t$.
Using the It\^{o} formula and the invariance of the increments under unitary transformations, it is possible to prove that the law in path space of the eigenvalues $\lambda_1(t),...,\lambda_N(t)$ of $X_2(t)$ is the unique weak solution of the system of stochastic differential equations (SDE)
\begin{equation}
\label{dbmev}
 \dd \lambda_k(t)=\frac{{2}}{\sqrt{\beta}}\dd W_k(t)+2\sum_{l\neq k}\frac{1}{\lambda_k(t)-\lambda_l(t)}\dd t,\hspace{1cm}k=1,...,N
\end{equation}
with $W_k$ independent standard Brownian motions and $\beta=2$ in our case. Although it is not hard to formally derive (\ref{dbmev}), the singular level repulsion terms (which originate from second order perturbation theory) cause that a solution of (\ref{dbmev}) is only defined up to the first collision of two eigenvalues. Thus, to make the derivation of (\ref{dbmev}) from (\ref{dbmmat2}) rigorous, one needs to prove that almost surely no level crossing can occur. This was accomplished directly by analysis of (\ref{dbmmat2}) in \cite{mckean}, and in \cite{chan},\cite{rogersshi} via a Lyapunov function argument for (\ref{dbmev}) (for general $\beta\geq1$), which is presented comprehensively in \cite{agz}.

Among other applications in physics, random matrices have been employed to understand the properties of disordered conductors, in particular the phenomenon of universal conductance fluctuations \cite{as,imry}. In the arguably simplest case, one can model the conductor as a quasi-1D wire, a system of $N$ interacting channels. The matrix in consideration is the transfer matrix $\caM\in\bbC^{2N\times2N}$ which maps the quantum state at the left side of the wire to the state at the right side (rather than mapping incoming states to outgoing states, as a scattering matrix does).

Instead of the self-adjointness of Wigner matrices, the main constraint for transfer matrices is \emph{current conservation}, (corresponding to unitarity of the scattering matrix), which in a suitable basis reads
\beq
\label{conservesigz}
\caM^*\sigz\caM=\sigz,\hspace{1cm}\sigz=\begin{pmatrix}1_N&0\\0&-1_N\end{pmatrix},
\eeq
\cite{mpk}, and we define the group
\beq
\scrG_2=\left\lbrace\caM\in\bbC^{2N\times 2N}:\caM^*\sigz\caM=\sigz\right\rbrace.
\eeq
In case the underlying quantum mechanical system is \emph{time-reversal symmetric}, \ie{}in the absence of magnetic fields, the transfer matrix additionally has to satisfy
\beq
\label{conservesigx}
\sigx\caM\sigx=\overline{\caM},\hspace{1cm}\sigx=\begin{pmatrix}0&1_N\\1_N&0\end{pmatrix}.
\eeq
Accordingly,
\beq
\scrG_1=\left\lbrace\caM\in\bbC^{2N\times 2N}:\caM^*\sigz\caM=\sigz\mbox{ and }\sigx\caM\sigx=\overline{\caM}\right\rbrace.
\eeq
In addition to the groups $\scrG_\beta$ with parameter $\beta=1,2$ as defined above, the value $\beta=4$ is also algebraically and physically meaningful, as it corresponds to a quantum system allowing for spin-orbit scattering, \cite{beenakker}. However, this last case does not introduce any new stochastic features and is disregarded for the sake of notational simplicity. One should remark that the correspondence of symmetries of the quantum system and the choice of matrix ensembles already arises for Gaussian ensembles and Dyson Brownian motion \cite{mehta}, the ensemble suitable for time-reversal invariant quantum systems (and thus losely related to our $\caG_1$) being 
\begin{equation}
 \label{dbmmat1}
 X_1(t)=Y_1(t)+Y_1(t)^T,
\end{equation}
with $Y_1(t)$ being an $N\times N$ matrix with all entries independent \emph{real} standard Brownian motions.

As in the theory of Wigner matrices, the focus is on the spectral properties of those transfer matrices: for each $\caM$ there exists a decomposition
\begin{equation}
\label{singvalM}
 \caM=\begin{pmatrix} \caM_{++}&\caM_{+-}\\\caM_{-+}&\caM_{--}\end{pmatrix}=\begin{pmatrix} U_{+}&0\\0&U_{-}\end{pmatrix}\begin{pmatrix} \Lambda&\left(\Lambda^2-1\right)^{\frac{1}{2}}\\\left(\Lambda^2-1\right)^{\frac{1}{2}}&\Lambda\end{pmatrix}\begin{pmatrix} V_+&0\\0&V_-\end{pmatrix}
\end{equation}
with (non-unique) $N\times N$ unitary matrices $U_+$, $U_-$ or $V_+$, $V_-$, and a diagonal matrix $\Lambda\geq1_N$. For general $\caM\in\scrG_2$, there is no relationship between those unitaries, while for $\caM\in\scrG_1$, $U_+=\overline{U_-}$ and $V_+=\overline{V_-}$. For later use we define the submanifolds $\tilde{\caG}_\beta$ as the set of those matrices in $\caG_\beta$ with a decomposition such that $\Lambda>1_N$ and has nondegenerate eigenvalues.

For physically realistic conductors, $\caM$ will be a random quantity due to microscopic disorder, so that on a macroscopic level it is natural to model the transfer matrix as a stochastic process $\left(\caM(s)\right)_{s\geq0}$ depending on the length $s$ of the wire. 

If we combine two pieces of wire with transfer matrices $\caM_{\mathrm{I}}$ and $\caM_{\mathrm{II}}$, respectively, we expect the concatenated wire to have 
\begin{equation}
 \label{groupprop}
\caM=\caM_{\mathrm{II}}\caM_{\mathrm{I}}
\end{equation}
as its transfer matrix \cite{mpk} and if we furthermore assume that the transfer matrices of disjoint short pieces of wire are i.i.d., we arrive at the stochastic differential equation
\beq
\begin{split}
\label{evolMdiff}
\dd\caM(s)&=\dd\caL(s)\caM(s)\hspace{7mm}(s\geq0).
\end{split}
\eeq
with a suitable stochastic process $\left(\caL(s)\right)_{s\geq0}$ that encodes the scattering properties of short wire pieces and is chosen such that (\ref{conservesigz}) (and, for $\beta=1$, (\ref{conservesigx})) are conserved.
Stochastically, every intial distribution for (\ref{evolMdiff}) makes sense, but it is physically reasonable to assign to a zero-length conductor the transfer matrix
\beq
\begin{split}
\label{evolMstart}
 \caM(0)&={1}_{2N}.\\
\end{split}
\eeq
 The process $\caL$ we will actually consider is a $2N\times 2N$ matrix-valued Brownian motion
\begin{equation}
 \caL(s)=\left(\begin{array}{cc} \mathfrak{a}_+(s) & \mathfrak{b}(s) \\[1mm] \mathfrak{b}^*(s) &{ \mathfrak{a}_-}(s)\end{array}\right)\
\end{equation}
with $N\times N$ block processes $\mathfrak{a}_+$, $\mathfrak{a}_-$, $\mathfrak{b}$ which, in terms of (\ref{dbmmat2},\ref{dbmmat1}), are distributed like 
\begin{equation}
\label{abfromdbm}
\begin{tabular}{l l l l l}
 \ensuremath{\mfa_+}&\ensuremath{\stackrel{\dd}{=}}&\ensuremath{\mfa_-}&\ensuremath{\stackrel{\dd}{=}iX_2/\sqrt{2N}}&\\

\ensuremath{\Re\mfb}&\ensuremath{\stackrel{\dd}{=}}&\ensuremath{\Im\mfb}&\ensuremath{\stackrel{\dd}{=}X_1/(2\sqrt{N+1})}&\ensuremath{(\beta=1)}\\
&&\ensuremath{\mfb}&\ensuremath{\stackrel{\dd}{=}Y_2/\sqrt{N}}&\ensuremath{(\beta=2)}.
 \end{tabular}
\end{equation}
For $\beta=1$, $\Re\mfb$ and $\Im\mfb$ are mutually independent and jointly independent of $\mfa_\pm$, while $\mfa_+$ and $\mfa_-$ are correlated by $\mfa_-=\overline{\mfa_+}$. For $\beta=2$, however, the processes $\mfa_+$, $\mfa_-$ and $\mfb$ are all independent.

One motivation for this choice of $\caL$ is that it predicts the same dynamics for the diagonal matrix $\Lambda$ as the maximum entropy assumption \cite{mpk}. The reason is, that after endowing the groups $\scrG_\beta$ with a suitable right-invariant metric, (\ref{evolMdiff}) is just the Brownian motion on the Riemannian manifold $\scrG_\beta$, while \cite{mpk} essentially single out the heat kernel by their maximum entropy requirement for the propagator. Moreover, (\ref{evolMdiff}) has been derived in \cite{wdrsb,wdrsbmb} as scaling limit from a microscopic quantum model (an Anderson model on a tube) --- up to minor deviations originating from the Anderson Hamiltonian dictating a preferred basis in the weak-perturbation limit.

Finally, and most important in the present context, the increments of $\caL$ are invariant under the conjugation
\begin{equation}
\label{eq_dLinvar}
 \caU^*\dd\caL\caU\stackrel{d}{=}\dd\caL.
\end{equation}
$\caU$ can be any unitary matrix of the form
\begin{equation}
 \caU=\begin{pmatrix} U_{+}&0\\0&U_{-}\end{pmatrix}
\end{equation}
with the blocks $U_+$ and $U_-$ chosen independently for $\beta=2$, and correlated by $U_+=\overline{U_-}$ in case $\beta=1$, compare (21), (22) in \cite{wdrsb}. This perfectly corresponds to the orthogonal/unitary invariance of the GUE/GOE-like increments of (\ref{dbmmat2},\ref{dbmmat1}), and will finally allow us to prove an autonomous (\ie{}eigenvector-independent) equation for $\Lambda$.


First, we give a formal derivation and start with a stochastic evolution for the matrix $\caM_{++}^*\caM_{++}$ and its eigenvalues $\lambda_k$, ordered like $\lambda_1\geq...\geq\lambda_N$. Inserting the $++$ component of (\ref{evolMdiff}),
\begin{equation}
 \dd \caM_{++} =     \dd \mathfrak{a}  \caM_{++} + \dd \mathfrak{b}  \caM_{-+},
\end{equation}
into It\^{o}'s formula, we obtain
\begin{equation}
\begin{split}
 \dd (\caM^*_{++}\caM_{++})   =  &    \caM^*_{++} \left(  \dd \mathfrak{a}_+  \caM_{++} + \dd \mathfrak{b}  \caM_{-+}  \right) \\
 &+        \left(  \caM^*_{++}  \dd \mathfrak{a}_+^* + \caM_{-+}^* \dd \mathfrak{b}^*   \right)   \caM_{++}  \\
 & +   \left(  \caM^*_{++}  \dd \mathfrak{a}_+^* + \caM_{-+}^* \dd \mathfrak{b}^*   \right)  \left(  \dd \mathfrak{a}_+  \caM_{++} + \dd \mathfrak{b}  \caM_{-+}  \right)\\
=&\caM^*_{++} \dd\mfb  \caM_{-+}+\caM^*_{-+} \dd\mfb^*  \caM_{++}+\caM_{++}^*\caM_{++}\dd s+\caM_{-+}^*\caM_{-+}\dd s
\end{split}
\end{equation}
where we used the independence of $\mfa_+$ and $\mfb$, and their explicit form (\ref{abfromdbm}) in the last line. By second order perturbation theory for the eigenvalues of $\caM^*_{++}\caM_{++}$ and It\^o's formula,
\begin{equation}
 \dd\lambda_k=\bra v_k,\dd\left(\caM_{++}^*\caM_{++}\right)v_k\ket+\sum_{l\neq k}\frac{\left|\bra v_l,\dd\left(\caM_{++}^*\caM_{++}\right)v_k\ket\right|^2}{\lambda_k-\lambda_l},
\end{equation}
with $v_k$ the eigenvectors of $\caM^*_{++}\caM_{++}$. In view of (\ref{singvalM}), $v_k$ are the columns of $V_+^*$, while $\sqrt{\lambda_k}=\Lambda_{kk}$ are the singular values of $\caM_{++}$. Furthermore, by
\begin{equation}
\begin{split}
 U_-^*\caM_{-+}V_+^*=\left(\Lambda^2-1\right)^{\frac{1}{2}}\\
U_+^*\caM_{++}V_+^*=\Lambda
\end{split}
\end{equation}
it is convenient to consider the process with
\begin{equation}
 \label{deftildeb}
\dd\tilde{\mfb}(s)=U_+^*(s)\dd\mfb(s) U_-(s)
\end{equation}
for which, by (\ref{eq_dLinvar}),
\begin{equation}
\label{tilb=b}
 \dd\tilde{\mfb}\stackrel{d}{=}\dd\mfb.
\end{equation}
With these definitions,
\begin{equation}
\label{dlambda1}
\begin{split}
 \dd\lambda_k=&2\sqrt{\lambda_k(\lambda_k-1)}\dd\left(\Re\tilde{\mfb}_{kk}\right)+(2\lambda_k-1)\dd s
\\&+\sum_{l\neq k}\left(\sqrt{\lambda_l(\lambda_k-1)}\dd\tilde{\mfb}_{lk}+\sqrt{\lambda_k(\lambda_l-1)}\dd\overline{\tilde{\mfb}_{kl}}\right)\\
&\hspace{2cm}\times\left(\sqrt{\lambda_l(\lambda_k-1)}\dd\overline{\tilde{\mfb}_{lk}}+\sqrt{\lambda_k(\lambda_l-1)}\dd{\tilde{\mfb}_{kl}}\right)/{(\lambda_k-\lambda_l)}.
\end{split}
\end{equation}
We can use (\ref{tilb=b}) with (\ref{abfromdbm}) to see
\begin{equation}
\label{defBk}
 \Re\tilde{\mfb}_{kk}(s)=-\sqrt{\frac{1}{\beta(N-1)+2}}B_k(s)
\end{equation}
for all $k=1,...,N$, with independent standard real Brownian motions $B_k$, while
\begin{equation}
\begin{split}
 \left(\sqrt{\lambda_l(\lambda_k-1)}\dd\tilde{\mfb}_{lk}+\sqrt{\lambda_k(\lambda_l-1)}\dd\overline{\tilde{\mfb}_{kl}}\right)&\left(\sqrt{\lambda_l(\lambda_k-1)}\dd\overline{\tilde{\mfb}_{lk}}+\sqrt{\lambda_k(\lambda_l-1)}\dd{\tilde{\mfb}_{kl}}\right)\\
=\frac{\beta}{\beta(N-1)+2}&\left(2\lambda_k\lambda_l-\lambda_k-\lambda_l\right)\dd s,
\end{split}
\end{equation}
and thus
\begin{equation}
\label{SDElambda}
 \dd\lambda_k=\left(2\lambda_k-1+\frac{\beta}{\beta(N-1)+2}\sum_{l\neq k}\frac{2\lambda_k\lambda_l-\lambda_k-\lambda_l}{\lambda_k-\lambda_l}\right)\dd s-\sqrt{\frac{4\lambda_k(\lambda_k-1)}{\beta(N-1)+2}}\dd B_k.
\end{equation}
After introducing the \emph{transmission eigenvalues} $T_k=\lambda_k^{-1}\in[0,1]$, another application of the It\^o formula yields the DMPK (Dorokhov-Mello-Pereyra-Kumar) equation for the transmission eigenvalues (compare (3.9) in \cite{beenakker}, (24) in \cite{wdrsb}):
\begin{equation}
\label{dmpkeq}
 \dd T_k(s)=v_k(T(s))\dd s+D_k(T(s))\dd B_k(s),
\end{equation}
$B_k$, $k=1,...,N$ independent Brownian motions,
\begin{equation}
\label{driftdiff}
 \begin{split}
  v_k&=-T_k+\frac{2T_k}{\beta N+2-\beta}\left(1-T_k+\frac{\beta}{2}\sum_{j\neq k}\frac{T_k+T_j-2T_kT_j}{T_k-T_j}\right)\\
D_k&=\sqrt{4\frac{T_k^2(1-T_k)}{\beta N+2-\beta}}.
 \end{split}
\end{equation}
\remark{}As already noted in \cite{wdrsb}, the term ``DMPK equation'' usually refers to the forward equation for the density of the $T_k$'s. We also use it for the corresponding stochastic differential equation, as this is the more natural object in our analysis.

\noindent{}The form of both the drift term $v_k$ and the diffusion term $D_k$ causes the transmission values to decay to $0$ as $s$ tends to infinity, which perfectly matches the decrease of conductance with increasing wire length. However, the drift $v_k$ also contains repulsion terms originating from second order perturbation theory. As a consequence, the eigenvalues $T_k$ ``try to avoid'' degeneracy. Thus, and similar to Dyson Brownian motion, the above derivation is merely ``formal'', as the It\^o formula is only applicable if the denominator $\lambda_k-\lambda_l$ (or $T_k-T_l$) stays away from zero, \ie{}$\caM^*_{++}(s)\caM_{++}(s)$ never has degenerate eigenvalues. For $s>0$, this is already a nontrivial finding, but even more, we want to start the evolution equation (\ref{evolMdiff}) with the completely degenerate matrix (\ref{evolMstart}). The goal of our paper is the derivation of the following rigorous statement about the transmission value process of $\left(\caM(s)\right)_{s\geq0}$. 
\begin{thm}
\label{maintheorem}
 Let $\beta=1$ or $\beta=2$, and $\left(\caM(s)\right)_{s\geq0}$ be the solution of (\ref{evolMdiff}), starting from (\ref{evolMstart}). Then the distribution in pathspace of its transmission eigenvalue process is given by the law of the unique continuous process $\left(T_k(s)\right)_{s\geq0,k=1...N}$ which starts from $T_k(0)=1$ for $k=1,...,N$ and is a strong solution to (\ref{dmpkeq}) for $s>0$. Moreover, the transmission eigenvalues $T_k$, $T_l$ for $k\neq l$ almost surely never collide for $s>0$, with each staying in the open interval $(0,1)$.
\end{thm}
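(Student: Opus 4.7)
The strategy is to combine a stopping-time/Lyapunov argument, which both rules out collisions of the $T_k$ and prevents them from hitting the endpoints $0,1$, with an approximation procedure that handles the degenerate initial condition, and finally to identify the resulting process with the transmission eigenvalues of $\caM(s)$.

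First I would solve (\ref{dmpkeq}) starting from a non-degenerate initial condition $T^{(\epsilon)}\in(0,1)^N$ with $T^{(\epsilon)}_1>\cdots>T^{(\epsilon)}_N$. On the open subset of $(0,1)^N$ where no two coordinates coincide and none lies in a neighbourhood of $\{0,1\}$, the coefficients $v_k, D_k$ are locally Lipschitz, so standard SDE theory yields a unique strong solution up to the explosion time $\tau=\lim_n\tau_n$, where
\beqn
\tau_n=\inf\bigl\{s\ge 0:\min_k (T_k-T_{k+1})(s)<1/n\text{ or }T_k(s)\notin(1/n,1-1/n)\text{ for some }k\bigr\}.
\eeqn
The central analytic step is to show $\tau=\infty$ almost surely by means of a Lyapunov function whose singularities encode all bad behaviour, for example one of the form
\beqn
\Phi(T)=-\sum_k\log T_k-\sum_k\log(1-T_k)-c\sum_{k<l}\log(T_k-T_l),
\eeqn
with the constant $c>0$ tuned so that, under the generator $\mathscr L$ of (\ref{dmpkeq}), one obtains $\mathscr L\Phi\le A+B\Phi$. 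A Gronwall-type estimate on $\mathbb E[\Phi(T(s\wedge\tau_n))]$ then forces $\tau_n\nearrow\infty$, yielding noncollision and confinement to $(0,1)^N$ for all positive times.

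Next, to handle the degenerate initial condition $T_k(0)=1$, I would pick a sequence $T^{(\epsilon)}\to(1,\ldots,1)$, take the corresponding solutions $T^{(\epsilon)}(\cdot)$ from the first step, and establish convergence on pathspace. Tightness comes from the uniform-in-$\epsilon$ bounds implicit in the Lyapunov estimate, while identification of the limit uses the fact that for any $s_0>0$ the law of $T^{(\epsilon)}(s_0)$ converges to a distribution supported on the open Weyl chamber inside $(0,1)^N$, so the limit trajectory on $[s_0,\infty)$ immediately enters the regime of strong uniqueness. Strong uniqueness on $[s_0,\infty)$ then propagates to uniqueness of the limit after letting $s_0\downarrow 0$ and using continuity at the initial point.

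Finally, identification with the transmission eigenvalues of $\caM(s)$ proceeds through the It\^o calculation (\ref{dlambda1})--(\ref{SDElambda}), which becomes rigorous on the event that $\caM^{*}_{++}(s)\caM_{++}(s)$ has simple spectrum strictly above $1$, i.e.\ that $\caM(s)\in\tilde\scrG_\beta$; by the invariance (\ref{eq_dLinvar}) and a short smoothness argument for the Brownian motion on $\scrG_\beta$ this event has full probability for every $s>0$, so the transmission eigenvalue process of $\caM$, restarted at any $s_0>0$, solves (\ref{dmpkeq}) and has the same distribution as the limit constructed above. Continuity of $\caM$ at $s=0$ then allows one to pass to $s_0\downarrow 0$. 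The hardest step will be the Lyapunov estimate: the repulsion terms have to be balanced precisely against the vanishing diffusion at $T_k\in\{0,1\}$ and the boundary-driving part of $v_k$, and selecting the right coefficient $c$ for the collision logarithms so that they can be absorbed into $\mathscr L\Phi$ is the key computation.
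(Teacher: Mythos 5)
Your high-level outline matches the paper's architecture (Lyapunov/stopping-time estimate for noncollision in $D_N$, approximation for the degenerate start, identification with the matrix eigenvalue process via It\^{o}), but the proposal papers over the two places where the actual work lies.

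The first gap is the convergence of the approximating solutions. You write that tightness follows from "the uniform-in-$\epsilon$ bounds implicit in the Lyapunov estimate," but the Lyapunov estimate the paper establishes is of the form $\Emean{f(T(s\wedge S_M))}\leq CN^2 s+f(T(0))$, and $f(T^{(\epsilon)}(0))\to\infty$ as $T^{(\epsilon)}(0)\to(1,\dots,1)$. So there are no uniform-in-$\epsilon$ Lyapunov bounds, and tightness does not come for free. The paper's actual mechanism is a monotone comparison result (Lemma~\ref{lmaconserveorder}): if two solutions start componentwise ordered in $D_N$, they stay ordered for all time. This is proved by yet another stopping-time argument using logarithmic transformation of the difference process, and is the structural reason the approximating solutions $T^{(n)}$ converge \emph{monotonically} to a limit; locally uniform a.s.\ convergence is then extracted by an It\^{o}-isometry, Doob-inequality, Borel--Cantelli, and Gronwall argument. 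Similarly, the assertion that "the law of $T^{(\epsilon)}(s_0)$ converges to a distribution supported on the open Weyl chamber" is exactly what needs proving; the paper does it via a contradiction argument using the blow-up of the drift of the approximants when the limit is assumed to sit on the boundary of $D_N$ on a positive-probability interval. Without the comparison lemma and that argument, your convergence step has a genuine hole.

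The second issue is in the identification. Your route — restart $\caM$'s eigenvalue process at $s_0>0$, argue $\caM(s_0)\in\tilde\scrG_\beta$ a.s., run the It\^{o} calculation forward — requires knowing that $\caM(s)\in\tilde\scrG_\beta$ a.s.\ for \emph{every} $s>0$, plus a clean construction of the eigenvector-frame process $U_\pm(s),V_\pm(s)$ along $\caM$. The paper deliberately avoids both: Lemma~\ref{lma_TMinDN} proves only the weaker $\lim_{s\to0}\prob{T\upm(s)\in D_N}=1$ (explicitly remarking that heat-kernel regularity would give more but opting for the elementary statement), and instead of differentiating $\caM$'s eigenvalues directly it reverses direction: from a DMPK solution started at $T\upm(\eps)$ on the good event $A_\eps$, it \emph{constructs} unitary processes $U^\eps_\pm,V^\eps_\pm$ via auxiliary skew-Hermitian SDEs, forms $\caM^\eps=\caU^\eps\caS^\eps\caV^\eps$, and verifies by It\^{o} calculus and the invariance (\ref{eq_dLinvar}) that $\caM^\eps\stackrel{d}{=}\caM$ in path space. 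The equality $T^\eps\stackrel{d}{=}T\upm$ then drops out by construction, and $\eps\to0$ closes the argument. This avoids having to control the frame dynamics through the possible degeneracies of $\caM$. Your forward version would need to supply both $\caM(s)\in\tilde\scrG_\beta$ a.s.\ for all $s>0$ and a rigorous construction of the frame process; neither is "short."

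In summary: the proposal captures the three-stage structure but misattributes to the Lyapunov bound a uniformity it does not have, and underestimates the care needed in the identification step. Supplying a comparison/monotonicity lemma and the explicit construction of the auxiliary matrix process $\caM^\eps$ is where the paper's proof actually earns the theorem.
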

Speaking in terms of manifolds, we thus see that the paths of the Brownian motion (\ref{evolMdiff}) on $\caG_\beta$, even though starting from a degenerate matrix $\caM(0)\in\caG_\beta\setminus\tilde{\caG}_\beta$, are almost surely contained in $\tilde{\caG}_\beta$ for all positive $s$, and do not explode for finite $s$ (this would correspond to $T_k(s)=0$). It will be obvious from the proof of Theorem \ref{thmstartatone} that the way to handle the singular initial condition (\ref{evolMstart}) could as well be applied to any other degenerate $\caM(0)\in\caG_\beta\setminus\tilde{\caG}_\beta$. For the sake of notational simplicity, we stay with the physically most interesting case $\caM(0)=1_{2N}$.

We will start from considering the DMPK equation not as an eigenvalue process, but a process in its own right and show that it has a unique strong solution for all $\beta\geq1$ in subsection \ref{solveDMPK}. Once this is established, we verify in subsection \ref{equal_dist} that the transmission eigenvalue process of $\caM$ has the same law in path space.

Whenever possible, the analysis will be carried out in the picture of the transmission eigenvalues $T_k$ (rather than the $\lambda_k$). This is because the $T_k$ are the physically most intuitive quantity, each $T_k$ representing the transmittance of one channel. Accordingly, when modelling the passage of electrons through a $N$-channel wire, the dimensionless conductance of the wire is given by the Landauer formula (\cite{beenakker}, equation (33)),
\begin{equation}
 \label{landauer}
g=\sum_{k=1}^NT_k.
\end{equation}
This formula for $g$ and other linear functionals of the $T_k$ describe the most important transport properties of the disordered conductor. Plugging the DMPK forward equation into (\ref{landauer}), \cite{benrej} established the universal conductance fluctuations
\begin{equation}
 \mathrm{Var}(g)=\frac{2}{15\beta}
\end{equation}
for $\beta=2$ in the intermediate metallic phase with $1\ll s\ll N$.
\section{Rigorous analysis of the process of transmission eigenvalues}
\label{welldefdmpk}
\subsection{The unique strong solution of the DMPK equation}
\label{solveDMPK}

We first investigate equation (\ref{dmpkeq}) starting from a nondegenerate initial condition $0<T_1(0)<...<T_N(0)<1$, corresponding to $\caM(0)\in\tilde{\caG}_\beta$.
\begin{thm}
\label{thmnocoll}
Let $(\Omega,\bbP)$ be a probability space endowed with a filtration $\caF=\left(\caF_t\right)_{t\geq0}$ and $B_1,...,B_N$ be independent real Brownian motions adapted to $\caF$. Let $D_N$ be the set of strictly ordered $N$-tupels in $(0,1)$, $D_N=\left\lbrace x\in\bbR: 0<x_1<...<x_N<1\right\rbrace$. Assume initial conditions  $T(0)=\left(T_1(0),...,T_N(0)\right)\in D_N$, and take $\beta\geq1$. Then the stochastic differential equation
(\ref{dmpkeq}) has a unique strong solution with initial condition $T(0)$, and $T(s)\in D_N$ for all $s>0$. 
The law of this strong solution also defines the unique weak solution of (\ref{dmpkeq}). 
\end{thm}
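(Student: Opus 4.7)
The plan is to adopt the classical two-step strategy for SDEs whose coefficients are smooth inside an open state space but singular on its boundary: first build a local strong solution by standard Lipschitz theory on an exhaustion of $D_N$ by compact subdomains, then use a Lyapunov function to prove that the exit time from $D_N$ is almost surely infinite. The three singularities to be controlled are the level repulsion $(T_k-T_l)^{-1}$ in $v_k$, the vanishing of $D_k$ at $T_k=0$, and the vanishing of $D_k$ at $T_k=1$.

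For the local step I would introduce the exhausting family
\begin{equation*}
D_N^{(m)} = \bigl\{ T \in \bbR^N : T_1 \geq 1/m,\ 1-T_N \geq 1/m,\ T_{k+1}-T_k \geq 1/m \text{ for } k=1,\dots,N-1 \bigr\},
\end{equation*}
pick $m_0$ so that $T(0) \in D_N^{(m_0)}$, and for each $m \geq m_0$ extend the coefficients $(v_k,D_k)$ smoothly from $D_N^{(m)}$ to globally Lipschitz vector fields on $\bbR^N$. The standard Lipschitz-coefficient theorem then yields a pathwise unique $\caF$-adapted strong solution $T^{(m)}$ of the truncated SDE on $[0,\infty)$. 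Since the extensions are compatible, the stopping times $\tau_m = \inf\{s>0 : T^{(m)}(s) \notin D_N^{(m)}\}$ are nondecreasing in $m$ and the paths $T^{(m)}$ agree on $[0,\tau_m]$. Setting $\tau = \lim_m \tau_m$ and $T(s) = T^{(m)}(s)$ for $s < \tau_m$ thus defines the unique strong solution of (\ref{dmpkeq}) on the random interval $[0,\tau)$.

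To prove $\tau = \infty$ almost surely I would employ the Lyapunov function
\begin{equation*}
\Phi(T) = -\sum_{k<l}\log(T_l - T_k) - a\sum_{k=1}^N \log T_k - b\sum_{k=1}^N \log(1 - T_k),
\end{equation*}
with positive constants $a,b$ to be chosen. Because $\Phi(T) \to +\infty$ as $T \to \partial D_N$, any uniform-in-$m$ upper bound on $\bbE[\Phi(T^{(m)}(S \wedge \tau_m))]$ forces $\bbP(\tau_m \leq S) \to 0$ for every finite $S$. By It\^o's formula,
\begin{equation*}
d\Phi(T^{(m)}) = (\text{local martingale}) + \mathscr{A}\Phi(T^{(m)})\,ds, \qquad \mathscr{A}\Phi = \sum_k v_k \partial_k \Phi + \tfrac12 \sum_k D_k^2 \partial_k^2 \Phi.
\end{equation*}
Once $\mathscr{A}\Phi$ is bounded above by a constant $C=C(\beta,N,a,b)$ on $D_N$, taking expectations and letting $m\to\infty$ gives $\tau = \infty$ a.s.\ and $T(s) \in D_N$ for all $s>0$; strong existence together with pathwise uniqueness then yields weak uniqueness via Yamada--Watanabe.

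The main technical step, and the main obstacle, is the verification $\mathscr{A}\Phi \leq C$; it is where the hypothesis $\beta \geq 1$ enters. Expanding $\mathscr{A}\Phi$ produces three families of potentially singular contributions. Near a collision $T_k \approx T_l \approx T$ the leading $(T_k-T_l)^{-2}$-terms from the repulsion piece of $\Phi$ combine into $\frac{4T^2(1-T)(1-\beta)}{c_\beta(T_k-T_l)^2}$ with $c_\beta = \beta N + 2 - \beta$, which is nonpositive precisely for $\beta \geq 1$; this is the direct analogue of the Rogers--Shi cancellation for Dyson Brownian motion. Near $T_k = 0$ the function $v_k/T_k$ extends continuously, so every contribution from $-a\log T_k$ is automatically bounded. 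Near $T_k = 1$ the $(1-T_k)^{-1}$-singularities from $v_k \partial_k(-b\log(1-T_k))$ and from $\tfrac12 D_k^2 \partial_k^2(-b\log(1-T_k))$ cancel at leading order by the identity $\lim_{T_k \to 1}[v_k + 2T_k^2/c_\beta] = 0$, which is easily checked from (\ref{driftdiff}). The bulk of the work will lie in assembling these partial cancellations, together with the control of the mixed cross terms $(T_k-T_l)^{-1}(T_k-T_j)^{-1}$ arising from the double sum in $v_k \partial_k \Phi$, into the global bound $\mathscr{A}\Phi \leq C$ on $D_N$.
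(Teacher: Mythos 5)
Your strategy is the same as the paper's: localize via an exhaustion (the paper multiplies the coefficients by a smooth cutoff $\chi_R$ supported in the interior of $D_N$, you extend from compact subdomains; these are equivalent), run a Lyapunov/stopping-time argument to show the exit time is infinite, and deduce pathwise and weak uniqueness from the truncated problems. Your Lyapunov function is the paper's $f$ up to the choice of boundary weights $a,b$, and you have correctly identified both decisive cancellations: the $(T_k-T_l)^{-2}$ coefficient $4T^2(1-T)(1-\beta)/c_\beta$, nonpositive precisely for $\beta\geq1$, and the $T_k\to1$ cancellation $v_k + 2T_k^2/c_\beta \to 0$ that tames the $(1-T_k)^{-1}$ singularity. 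So the skeleton is sound and matches the paper.

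What you have not done is the part that actually carries the proof: the verification $\mathscr{A}\Phi \leq C$ on all of $D_N$, which you defer with ``the bulk of the work will lie in assembling these partial cancellations.'' This is where the three-index cross terms $(T_k-T_l)^{-1}(T_k-T_j)^{-1}$ arising from $v_k\partial_k\Phi$ have to be shown to be bounded, and the local two-point analysis near a single collision does not by itself control them. The paper handles this with a specific algebraic device: setting $\rho_k = T_k^2(1-T_k)$ and $T_0\equiv 0$, $T_{N+1}\equiv1$, one symmetrizes the triple sum
\begin{equation*}
Z = \sum_{k}\Bigl(\sum_{j\neq k}\tfrac{1}{T_k-T_j}\Bigr)\Bigl(\sum_{l\neq k}\tfrac{\rho_k}{T_k-T_l}\Bigr) - \sum_k\sum_{m\neq k}\tfrac{\rho_k}{(T_k-T_m)^2},
\end{equation*}
uses the partial-fraction identity $\frac{1}{(T_k-T_l)(T_k-T_j)} = \bigl(\frac{1}{T_k-T_j}-\frac{1}{T_k-T_l}\bigr)\frac{1}{T_j-T_l}$ and the fact that $(\rho_k-\rho_j)/(T_k-T_j)$ is a polynomial, and collapses $Z$ to a manifestly bounded expression $\frac{1}{3}\sum(1-T_j-T_l-T_k)$. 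Your proposal gestures at this step but does not supply an argument for it, and it cannot be waved away: the uniform bound on $\mathscr{A}\Phi$ rather than merely its local near-boundary behaviour is what makes the Markov estimate $\prob{S_M\leq s}\leq (CN^2 s + f(T(0)))/M$ work. Everything else you wrote is consistent with the paper, so supplying this computation (or an equivalent symmetrization argument) would close the proof.
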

\begin{proof}
 Following the ideas of \cite{agz}. For $R>1$, we regularize the singular repulsion in the drift term of (\ref{driftdiff}) by setting
\beq
\begin{split}
&v_k^{(R)}(T)=\chi_R(T)\left(-T_k+\frac{2T_k}{\beta N+2-\beta}\left(1-T_k+\frac{\beta}{2}\sum_{j\neq k}\frac{T_k+T_j-2T_kT_j}{T_k-T_j}\right)\right)\\
&D_k^{(R)}(T)=\chi_R(T)D_k(T)
\end{split}
\eeq
with $\chi_R$ a smooth cut-off function compactly supported in the interior of $D_N$ and $\chi_R(T)=1$ whenever $T\in D_N$ and $\mathrm{dist}(T,\partial D_N)\geq (\sqrt{2}R)^{-1}$.
Now $v_k^{(R)}$ and $D_k^{(R)}$ are both uniformly Lipschitz, so the modified system has a unique strong solution $\left(T^{(R)}(s)\right)_{s\geq0}$ 
and a unique weak solution 
(\cite{ks}, Theorems 5.2.5, 5.2.9, \cite{oksendal}, Lemma 5.3.1). The idea of the proof is that $T^{(R)}(s)$ should be a solution of the original DMPK equation (\ref{dmpkeq}) as long as the differences of the $T_k^{(R)}$ and their distance to the boundaries of $[0,1]$ are larger than $R^{-1}$. Taking $R$ to zero should then yield a solution to (\ref{dmpkeq}) for all times. To control the separation of the eigenvalues and their distance to $0$ and $1$, we define a Lyapunov function
\beq
\begin{split}
\label{definitionf}
f(T)=\sum_{k=1}^N\left(-2\log(|T_k|)-2\log(|1-T_k|)-\sum_{l=1,l\neq k}^N\log(|T_k-T_l|)\right)
\end{split}
\eeq

It will be enough to define $f$ for $T$ from $D_N$, as we will always use it together with stopping times which will make sure $T$ doesn't leave this set. 

Note that if we take $T\in D_N$, all the logarithms in (\ref{definitionf}) are positive, thus we have $f(T)\geq0$ and also 
\beq
-\log(|T_k-T_l|)\leq f(T).
\eeq
Define 
\beq
S_{M,R}=\inf\left\lbrace s\geq0:f\left(T^{(R)}(s)\right)\geq M\right\rbrace
\eeq
for $M>0$. As $f$ is smooth on sets where it is uniformly bounded, $S_{M,R}$ is a stopping time. On the event $\left\lbrace S_{M,R}>s\right\rbrace$, we have
\beq
-\log(|T_k^{(R)}(s)-T_l^{(R)}(s)|)\leq f(T^{(R)}(s))\leq M
\eeq
so if we take $M=M(R)=\log{R}$,
\beq
\begin{split}
\label{controlsep}
|T_k^{(R)}(s)-T_l^{(R)}(s)|\geq e^{-M}&=R^{-1}\\
|T_k^{(R)}(s)|\geq e^{-M}&=R^{-1}\\
|1-T_k^{(R)}(s)|\geq e^{-M}&=R^{-1}.
\end{split}
\eeq
Thus, up to $S_{M(R),R}$, $T^{(R)}$ and all $T^{(\tilde{R})}$ with $\tilde{R}\geq R$ coincide and $T^{(R)}$ also solves the original equation (\ref{dmpkeq}) up to $S_{M(R),R}$.

Therefore, to calculate $\dd f(s)$ up to the stopping time $S_M=S_{M(R),R}$ it is enough to plug (\ref{dmpkeq}) into It\^{o}'s formula (\cite{ks}, Theorem 3.3.3). We drop the overscript $(R)$, as it does not matter in this context.
\beq
\begin{split}
\dd f(T(s))&=\\
&2\sum_{k=1}^N\left(-\frac{1}{T_k}+\frac{1}{1-T_k}-\sum_{j=1,j\neq k}^N\frac{1}{T_k-T_j}\right)\\
&\hspace{1.8cm}\times\left(-T_k+\frac{2T_k}{\beta N+2-\beta}\left(1-T_k+\frac{\beta}{2}\sum_{l=1,l\neq k}^N\frac{T_k+T_l-2T_kT_l}{T_k-T_l}\right)\right)\dd s\\
&+\sum_{k=1}^N\left(\frac{1}{T_k^2}+\frac{1}{(1-T_k)^2}+\sum_{j=1,j\neq k}^N\frac{1}{(T_k-T_j)^2}\right)4\frac{T_k^2(1-T_k)}{\beta N+2-\beta}\dd s\\
&+\dd A(s),
\end{split}
\eeq
with $\left(A(s)\right)_{s\geq0}$ denoting the local martingale fulfilling
\begin{equation}
 \dd A(s)=2\sum_{k=1}^N\left(-\frac{1}{T_k}+\frac{1}{1-T_k}-\sum_{j=1,j\neq k}^N\frac{1}{T_k-T_j}\right)2T_k\sqrt{\frac{1-T_k}{\beta N +2-\beta}}\dd B_k(s).
\end{equation}
(Remark: Because of (\ref{controlsep}), $A(s\wedge S_M)$ is certainly a martingale, which is enough for our purpose. We only see that $A$ is really a local martingale after having verified that $S_M\nearrow\infty$ as $M\rightarrow\infty$.)

Introducing $T_0\equiv0$, $T_{N+1}\equiv1$, we can more conveniently rewrite $\dd f$
\beq
\begin{split}
\dd f(T(s))=&-\frac{4}{\beta N+2-\beta}\sum_{k=1}^N\left(\sum_{j=0,j\neq k}^{N+1}\frac{1}{T_k-T_j}\right)\\&\hspace{2cm}\times\left((\beta-1)T_k^2+\beta N T_k(T_k-1)+\beta\sum_{l=0,l\neq k}^{N+1}\frac{\rho_k}{T_k-T_l}\right)\dd s\\
&+\sum_{k=1}^N\left(\sum_{j=0,j\neq k}^{N+1}\frac{1}{(T_k-T_j)^2}\right)4\frac{\rho_k}{\beta N+2-\beta}\dd s\\
&+\dd A(s),
\end{split}
\eeq
where we have set $\rho_k=T_k^2(1-T_k)$. Using $\rho_0=\rho_{N+1}=0$, we have
\begin{equation}
\begin{split}
 Z=&\sum_{k=1}^{N}\left(\left(\sum_{j=0,j\neq k}^{N+1}\frac{1}{T_k-T_j}\right)\left(\sum_{l=0,l\neq k}^{N+1}\frac{\rho_k}{T_k-T_l}\right)-\sum_{m=0,m\neq k}^{N+1}\frac{\rho_k}{(T_k-T_m)^2}\right)\\
=&\sum_{k=0}^{N+1}\left(\left(\sum_{j=0,j\neq k}^{N+1}\frac{1}{T_k-T_j}\right)\left(\sum_{l=0,l\neq k}^{N+1}\frac{\rho_k}{T_k-T_l}\right)-\sum_{m=0,m\neq k}^{N+1}\frac{\rho_k}{(T_k-T_m)^2}\right)\\
=&\sum_{k=0}^{N+1}\sum_{j=0}^{N+1}\sum_{l=0}^{N+1}\mathbf{1}\left(l\neq k, j\neq k, l\neq j\right)\frac{\rho_k}{(T_k-T_l)(T_k-T_j)}\\
=&\sum_{k=0}^{N+1}\sum_{j=0}^{N+1}\sum_{l=0}^{N+1}\mathbf{1}\left(l\neq k, j\neq k, l\neq j\right){\rho_k}\left(\frac{1}{T_k-T_j}-\frac{1}{T_k-T_l}\right)\frac{1}{T_j-T_l}\\
=&-2Z+\sum_{k=0}^{N+1}\sum_{j=0}^{N+1}\sum_{l=0}^{N+1}\mathbf{1}\left(l\neq k, j\neq k, l\neq j\right)\left(\frac{\rho_k-\rho_j}{T_k-T_j}-\frac{\rho_k-\rho_l}{T_k-T_l}\right)\frac{1}{T_j-T_l}\\
=&\frac{1}{3}\sum_{k=0}^{N+1}\sum_{j=0}^{N+1}\sum_{l=0}^{N+1}\mathbf{1}\left(l\neq k, j\neq k, l\neq j\right)(1-T_j-T_l-T_k),
\end{split}
\end{equation}
and thus for $T\in D_N$
\begin{equation}
 |Z|\leq\frac{2}{3}(N+1)^3.
\end{equation}
Furthermore
\beq
\sum_{k=1}^{N}\sum_{j=0,j\neq k}^{N+1}\frac{T_k(1-T_k)}{T_k-T_j}=\sum_{k=0}^{N+1}\sum_{k=0}^{N+1}\mathbf{1}(j\neq k)\frac{T_k(1-T_k)-T_j(1-T_j)}{2(T_k-T_j)}=\frac{1}{2}\sum_{k\neq j}(1-T_k-T_j),
\eeq
so
\beq
\left|\sum_{k=1}^{N}\sum_{j=0,j\neq k}^{N+1}\frac{T_k(1-T_k)}{T_k-T_j}\right|\leq\frac{(N+1)(N+2)}{2}.
\eeq
for all $T\in D_N$. Finally,
\beq
\begin{split}
\sum_{k=1}^{N}\sum_{j=0,j\neq k}^{N+1}\frac{T_k^2}{T_j-T_k}&=-\sum_{k=1}^{N}T_k+\sum_{k=1}^{N}\frac{T_k^2}{1-T_k}+\sum_{j,k=1,j\neq k}^N\frac{T_k^2}{T_j-T_k}\\&=-\sum_{k=1}^N\sum_{j=1}^N\frac{T_k+T_j}{2}+\sum_{k=1}^N\frac{T_k^2}{1-T_k}
\end{split}
\eeq
and thus on $D_N$
\beq
-N^2\leq\sum_{k=1}^{N}\sum_{j=0,j\neq k}^{N+1}\frac{T_k^2}{T_j-T_k}\leq\sum_{k=1}^N\frac{T_k^2}{1-T_k}.
\eeq

Thus there is a continous bounded function $g$ with $g(T)\leq C N^2$ for $T\in D_N$ such that
\beq
\begin{split}
\dd &f(T(s))\\&=g(T(s))\dd s+\frac{4(\beta-1)}{\beta N +2 -\beta}\sum_{k=1}^N\left(\frac{T_k^2}{1-T_k}-\left(\sum_{j=0,j\neq k}^{N+1}\frac{T_k^2(1-T_k)}{(T_k-T_j)^2}\right)\right)\dd s+\dd A(s)\\
&=g(T(s))\dd s-\frac{4(\beta-1)}{\beta N +2 -\beta}\sum_{k=1}^N\left(\sum_{j=0,j\neq k}^{N}\frac{T_k^2(1-T_k)}{(T_k-T_j)^2}\right)\dd s+\dd A(s).
\end{split}
\eeq

Recalling that $\beta\geq1$ and $A\left(s\wedge S_M\right)$ is a martingale, 
\beq
\Emean{f(T(s\wedge S_M))}\leq CN^2 s+f(T(0)).
\eeq
As $f$ is nonnegative, we have the Markov estimate
\beq
\prob{S_M\leq s}\leq\frac{1}{M}\Emean{f(T(s\wedge S_M))}\leq\frac{CN^2 s+f(T(0))}{M}
\eeq
for all $s\geq0$ and $M>0$. 

Now as by definition
\beq
S_{\tilde{M}}=S_{M(\tilde{R}),\tilde{R}}\geq S_{M({R}),{R}}=S_M
\eeq
for $\tilde{R}\geq R$, we conclude from $\prob{S_M\leq s}\rightarrow0$ as $M\rightarrow\infty$ for all $s\geq0$ that
\begin{equation}
 S_M\nearrow\infty\hspace{6mm}(M\rightarrow\infty)
\end{equation}
almost surely. As we have already seen that $T^{(R)}$ is a solution to (\ref{dmpkeq}) up to $S_{M(R),R}$, setting $T=T^{(R)}$ up to $S_{M(R),R}$ for all $R>1$ provides us with a strong solution of (\ref{dmpkeq}) for $s\in[0,\infty)$. This finishes the strong existence part.

To prove strong uniqueness, let a strong solution $T$ of (\ref{dmpkeq}) be given and define the stopping time $\tilde{S}_M$ quite analogously to $S_M$, but this time based on $T$ instead of $T^{(R)}$,
\beq
\tilde{S}_{M}=\inf\left\lbrace s\geq0:f\left(T(s)\right)\geq M\right\rbrace.
\eeq
If we take $e^M=R$, we have a result like (\ref{controlsep}), and by strong uniqueness of $T^{(R)}$, $T$ and $T^{(R)}$ coincide up to $\tilde{S}_M$. As one can show
\beq
\tilde{S}_M\nearrow\infty\hspace{6mm}(M\rightarrow\infty)
\eeq
just as before, $T$ has to be the solution we obtained in the existence part almost surely for all times.

Weak uniqueness of $T$ follows by exactly the same reasoning from the weak uniqueness property of $T^{(R)}$ for all $R>1$.

Finally, $T(s)\in D_N$ for all $s\geq0$ almost surely, because we have seen that for any $s\geq0$ there is almost surely a (random) $R>1$ such that $T(t)=T^{(R)}(t)$ for all $t\leq s$, but the $T^{(R)}$ stay in $D_N$ almost surely due to the cutoff in the definitions of $v_k^{(R)}$ and $D_k^{(R)}$.
\end{proof}
Next, we want to extend our existence and uniqueness result to the initial condition that corresponds to $\caM(0)={1}_{2N}$, i.e. $T_k(0)=1$ for all $k=1,...,N$. To do so, we adapt Proposition 4.3.5 and Lemma 4.3.6 about Dyson Brownian Motion in \cite{agz} to the DMPK equation (\ref{dmpkeq}).

To control the convergence of approximate solutions, we start with a comparison result for solutions with different initial conditions in $D_N$.

\begin{lma}
\label{lmaconserveorder}
 Fix $\beta\geq1$, $N\in\bbN$ and let $\left(R(s)\right)_{s\geq0}$ and $\left(T(s)\right)_{s\geq0}$ be two strong solutions to (\ref{dmpkeq}) with $R(0),T(0)\in D_N$ and 
\begin{equation}
\label{initialordering}
 R_{j}(0)>T_j(0)\hspace{5mm}\mbox{ for all }j=1,...,N.
\end{equation}
Then we almost surely have for all $s>0$ and all $j=1,...,N$
\begin{equation}
\label{ordering}
 R_{j}(s)>T_j(s)\hspace{5mm}\mbox{ for all }j=1,...,N.
\end{equation}
\end{lma}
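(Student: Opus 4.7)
The plan is to adapt the Lyapunov-function approach from Lemma 4.3.6 of \cite{agz} for Dyson Brownian motion to the present nonlinear setting. Since $R$ and $T$ are strong solutions driven by the same Brownian motions $B_1,\ldots,B_N$, the differences $\delta_j = R_j - T_j$ satisfy
\begin{equation*}
d\delta_j = (v_j(R) - v_j(T))\,ds + (D_j(R_j) - D_j(T_j))\,dB_j.
\end{equation*}
Because $D_j$ is locally Lipschitz on $D_N$, the diffusion coefficient of $\delta_j$ is of order $|\delta_j|$; only the drift can carry $\delta_j$ across zero, and the problem reduces to showing that this drift is sufficiently repulsive near the set $\{\delta_j=0\}$.

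To make this rigorous I would first localize using the Lyapunov function $f$ of (\ref{definitionf}) applied to both processes, setting $S_M = \inf\{s\geq 0 : f(R(s)) \vee f(T(s)) \geq M\}$. By Theorem \ref{thmnocoll} applied separately to $R$ and to $T$, $S_M \nearrow \infty$ almost surely as $M\to\infty$, and on $[0,S_M]$ all components and all pairwise differences of both processes are bounded below by $e^{-M}$, so every singular factor in the DMPK drift is uniformly controlled. It then suffices to show that $\tau = \inf\{s\geq 0 : R_j(s) \leq T_j(s)\text{ for some }j\}$ satisfies $\tau \geq S_M$ a.s. To this end I would introduce on $\{\tau > s\}$ the nonnegative functional
\begin{equation*}
G(s) = \sum_{j=1}^N \log\frac{1}{R_j(s) - T_j(s)},
\end{equation*}
which diverges to $+\infty$ as $s \nearrow \tau$ on the event $\{\tau \leq S_M\}$. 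By It\^o's formula,
\begin{equation*}
dG = -\sum_j \frac{v_j(R) - v_j(T)}{R_j - T_j}\,ds + \frac{1}{2}\sum_j \frac{(D_j(R_j) - D_j(T_j))^2}{(R_j - T_j)^2}\,ds + dM(s),
\end{equation*}
with $M$ a local martingale; the quadratic-variation contribution is bounded by some $C_M$ via the Lipschitz bound on $D_j$ inside the $S_M$-region.

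The hard part will be bounding the drift of $dG$ from above on $[0,\tau\wedge S_M]$. Setting $\psi(x,y) = x(x+y-2xy)/(x-y)$, the repulsive part of $v_j$ is proportional to $\sum_{l\neq j}\psi(T_j,T_l)$, and I would decompose
\begin{equation*}
\psi(R_j,R_l) - \psi(T_j,T_l) = [\psi(R_j,R_l) - \psi(T_j,R_l)] + [\psi(T_j,R_l) - \psi(T_j,T_l)]
\end{equation*}
via the mean-value theorem: the first bracket contributes a factor $\delta_j$ that cancels the denominator in $(v_j(R)-v_j(T))/\delta_j$, while the second yields $\delta_l$ and thus the a priori dangerous ratio $\delta_l/\delta_j$. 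These cross ratios must be tamed by summing over $j$ and symmetrizing the resulting double sum under $j\leftrightarrow l$, exploiting the antisymmetry $\psi(x,y) = -\psi(y,x)$ together with uniform bounds on the relevant partial derivatives of $\psi$ furnished by (\ref{controlsep}) for both processes; the remaining polynomial part of $v_k$ is straightforwardly Lipschitz. Granting this, the drift of $G$ is bounded above by some $C'_M$, so that $\mathbb{E}[G(s\wedge \tau \wedge S_M)] \leq G(0) + C'_M\, s$ for all $s\geq 0$. Since $G(0)$ is finite by (\ref{initialordering}), this contradicts the divergence $G\to+\infty$ at time $\tau$ on $\{\tau \leq S_M\}$, forcing $\tau \geq S_M$ almost surely. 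Letting $M\to\infty$ yields $\tau = \infty$ and hence (\ref{ordering}).
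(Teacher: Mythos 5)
Your overall strategy---the Lyapunov function $G(s)=\sum_j\log\bigl(1/(R_j(s)-T_j(s))\bigr)$, localization via $S_M$, and a Markov estimate on a level-$K$ stopping time---is a cleaner, one-shot route than the paper's. The paper instead argues in two steps: first it shows the \emph{sum} $u=\sum_j(R_j-T_j)$ stays positive by a $\log u$ argument (exploiting that $\sum_k v_k$ is a polynomial), and then shows each individual $u_k>0$ by contradiction, using that the repulsion difference $r(R,T)$ becomes nonnegative near a would-be collision time because the negative piece carries a factor $u_k\to 0$. However, the crucial step in your proposal---the upper bound on the drift of $G$---is not justified, and the two mechanisms you invoke both fail. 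First, $\psi(x,y)=x(x+y-2xy)/(x-y)$ is not antisymmetric: one has $\psi(x,y)+\psi(y,x)=x+y-2xy\neq0$, so symmetrizing the cross ratios $\delta_l/\delta_j$ via antisymmetry does not cancel them. Second, the partial derivatives of $\psi$ are not uniformly bounded along your mean-value points even on $[0,\tau\wedge S_M]$: $S_M$ controls the spacings \emph{within} each of the two ordered tuples, but the MVT point in, say, $\psi(T_j,R_l)-\psi(T_j,T_l)$ lies between $T_l$ and $R_l$, and its distance to $T_j$ is a cross-process quantity that neither $f(R)$ nor $f(T)$ controls; it can vanish even when both $f$'s stay bounded.

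The way to rescue your argument is to avoid MVT on $\psi$ and instead keep the common denominator: write
\begin{equation*}
\psi(R_j,R_l)-\psi(T_j,T_l)=\frac{P\,(R_j-T_j)+Q\,(R_l-T_l)}{(R_j-R_l)(T_j-T_l)}
\end{equation*}
with $P,Q$ polynomials on $[0,1]^4$; one may take $Q=2T_j^2(1-T_j)\ge0$, consistent with $\partial_y\psi(x,y)=2x^2(1-x)/(x-y)^2$. The denominator $(R_j-R_l)(T_j-T_l)$ is \emph{positive} on $D_N\times D_N$ and bounded below by $e^{-2M}$ on $[0,S_M]$. The dangerous piece of the drift of $G$ is then
\begin{equation*}
-\frac{\beta}{\beta N+2-\beta}\sum_{j}\sum_{l\neq j}\frac{Q}{(R_j-R_l)(T_j-T_l)}\,\frac{R_l-T_l}{R_j-T_j}\;\le\;0 \quad\text{on }\{s<\tau\},
\end{equation*}
since every factor is nonnegative. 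It can therefore simply be dropped from an upper bound; no symmetrization is needed at all. The remaining $P$-part and polynomial part of $v_k$ are bounded by an $M$-dependent constant on $[0,S_M]$, and your Markov/stopping argument closes the proof. This sign positivity of $\partial_2\psi$ is the same structural fact the paper exploits when observing that the $(R_j-T_j)$-proportional part of $r(R,T)$ is positive; your proposal needs to identify and use this sign, not the nonexistent antisymmetry or uniform derivative bounds.
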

\begin{proof}
 We start with defining the stopping time
\beq
S_\eps=\inf\left\lbrace s\geq0:\min_{j}\min\lbrace1-R_j(s),1-T_j(s)\rbrace<\eps\right\rbrace
\eeq
for $\eps>0$.
As we have seen in the proof of Theorem \ref{thmnocoll}, 
\beq
S_\eps\rightarrow\infty\hspace{5mm}\mbox{a.s.}
\eeq
as $\eps\rightarrow0$.

Furthermore let the stopping time $S$ be when the ordering (\ref{ordering}) is violated first, i.e.
\beq
S=\inf\left\lbrace s\geq0:R_j(s)=T_j(s)\mbox{ for some }j\right\rbrace.
\eeq
We want to show that $S=\infty$ a.s.

From (\ref{driftdiff}) we see that $\sum_kv_k(T)$ is a polynomial in the $T_k$'s so that
\begin{equation}
 \left|\sum_kv_k(R(s))-\sum_kv_k(T(s))\right|\leq C \sum_{k=1}^N\left|R_k(s)-T_k(s)\right|=C \sum_{k=1}^N\left(R_k(s)-T_k(s)\right)
\end{equation}
on the event $\lbrace S\geq s\rbrace$, where we have used that all $T_j,R_j$ are contained in the bounded set $[0,1]$.
Here and in subsequent estimates we use constants $C, \tilde{C}...$ which might depend on $N$, but not on $\eps$.
On the other hand, for
\beq
D_k(T_k)=2\sqrt{\frac{T_k^2(1-T_k)}{\beta N+2-\beta}}
\eeq
we have $\left|D_k(R_k)-D_k(T_k)\right|\leq C\left|R_k-T_k\right|/\sqrt{\eps}$ as long as $T_k,R_k\leq1-\eps$, so
\beq
\left|\sqrt{\sum_k\left(D_k\left(R_k(s)\right)-D_k\left(T_k(s)\right)\right)^2}\right|\leq\frac{\tilde{C}}{\sqrt\eps}\sum_{k=1}^N\left(R_k(s)-T_k(s)\right)
\eeq
on $\lbrace S\wedge S_\eps\geq s\rbrace$.

Thus 
\beq
u(s)=\sum_{k=1}^N\left(R_k(s)-T_k(s)\right)
\eeq
solves the stochastic differential equation
\beq
\dd u(s)=f(R(s),T(s))\dd s+\sigma(R(s),T(s))\dd B(s)
\eeq
with $B$ a Brownian motion and 
\begin{equation}
\label{estfandsig}
 \begin{split}
  \left|f(R(s),T(s))\right|&\leq C u\\
 \left|\sigma(R(s),T(s))\right|&\leq \frac{\tilde{C}}{\sqrt\eps} u
 \end{split}
\end{equation}
on $\lbrace S\wedge S_\eps\geq s\rbrace$. By It\^{o}'s rule, 
\beq
\dd\log(u(s))=\frac{1}{u(s)}\dd u(s)-\frac{1}{2u(s)^2}\sigma^2\dd s=h(R,T)\dd s+\tilde{\sigma}(R,T)\dd B(s)
\eeq
with $h\geq -C-\tilde{C}^2/\eps$ and $|\tilde{\sigma}|\leq \tilde{C}/\sqrt{\eps}$. Thus, on the event $\lbrace S\wedge S_\eps\geq s\rbrace$,
\beq
\log(u(s))\geq\log(u(0))-(C+\tilde{C}^2/\eps)s+\int_0^s\tilde{\sigma}(R(t),T(t))\dd B(t).
\eeq
As 
\begin{equation}
 \int_0^{S\wedge S_\eps\wedge s}\tilde{\sigma}^2\dd t\leq \frac{\tilde{C}^2s}{\eps},
\end{equation}
we have
\beq
\int_0^s\tilde{\sigma}(R,T)\dd B>-\infty
\eeq
almost surely on $\lbrace S\wedge S_\eps\geq s\rbrace$, and thus from (\ref{initialordering})
\begin{equation}
 u(s)\geq u(0)\exp\left(-(C+\tilde{C}^2/\eps)s+\int_0^s\tilde{\sigma}(R,T)\dd B\right)>0
\end{equation}
almost surely on $\lbrace S\wedge S_\eps\geq s\rbrace$ for all $\eps>0$. So with $S_\eps\rightarrow\infty$ as $\eps\rightarrow0$, $u(s)>0$ almost surely whenever $s\leq S$.

\vspace{5mm}

Now assume that for some finite $s_0$, $\prob{S\leq s_0}>0$. And define $u_j(s)=R_j(s)-T_j(s)$, which is almost surely nonnegative for all $s\leq S$ by continuity of paths. Whenever $S<\infty$, there is a (random) $k\in\lbrace1,...,N\rbrace$ such that $u_k(S)=0$, but as we have seen above that $u(s)>0$ almost surely on $\lbrace s\leq S\rbrace$, there is almost surely a $j$ such that $R_j(S)-T_j(S)>0$. 

For all $k$, $u_k$ solves the stochastic differential equation
\beq
\begin{split}
\dd& u_k(s)\\&=\left(-\frac{\beta(N-1)}{\beta(N-1)+2}u_k(s)-\frac{2}{\beta(N-1)+2}(R_k(s)+T_k(s))u_k(s)+r(R(s),T(s))\right)\dd s
\\&\hspace{2cm}+2\left(\sqrt{\frac{R_k^2(1-R_k)}{\beta N+2-\beta}}-\sqrt{\frac{T_k^2(1-T_k)}{\beta N+2-\beta}}\right)\dd B_k(s),
\end{split}
\eeq
with $r$ denoting the difference of the repulsion terms
\beq
\begin{split}
r(R,T)=&\frac{\beta}{\beta N+2-\beta}\sum_{j\neq k}\left[R_k\left(\frac{R_k+R_j-2R_kR_j}{R_k-R_j}\right)-T_k\left(\frac{T_k+T_j-2T_kT_j}{T_k-T_j}\right)\right]\\
=&\frac{\beta}{\beta N+2-\beta}\sum_{j\neq k}\frac{R_kT_k(R_k-T_k)+2T_jR_j(R^2_k-T^2_k)+(R_k+T_k)T_k(R_j-T_j)}{(R_k-R_j)(T_k-T_j)}\\
&\hspace{5mm}+\frac{\beta}{\beta N+2-\beta}\sum_{j\neq k}\frac{T_j\left(T_k+R_k+R_j+2R_kT_k\right)(T_k-R_k)}{(R_k-R_j)(T_k-T_j)},
\end{split}
\eeq
$s$ dependence suppressed. In the last two lines, the denominator $(R_k-R_j)(T_k-T_j)$ is always positive as $R,T\in D_N$. Now on a path with $S<\infty$, $s\leq S$, and $k$ such that $u_k(S)=0$, the numerator in the second last line is positive, the one in the last line is negative. However, while the negative factor $(T_k-R_k)(s)=-u_k(s)$ vanishes as $s\nearrow S$, some of the positive terms $R_j-T_j$ are bounded away from zero uniformly in a neighborhood of $S$, so there is an ($\omega$-dependent) $s_1<S$ such that
\beq
r(R(s),T(s))\geq0\hspace{5mm}\mbox{ for all }s\in\left[s_1,S\right].
\eeq
Then given our assumption that $\prob{S(\omega)\leq s_0}>0$, we can find \emph{deterministic}, constant $s_1$, $\eps$ and $k$, such that the event 
\begin{equation}
 A=\lbrace s_1< S\leq s_0\rbrace\cap\lbrace r(R(s),T(s))\geq 0 \forall s\in[s_1,S]\rbrace\cap\lbrace u_k(S)=0\rbrace\cap\lbrace S_\eps\geq s_0\rbrace
\end{equation}
has a positive probability $\prob{A}>0$. On the event $A$ we have for $s\in[s_1,S]$
\beq
\dd u_k(s)\geq -2u_k(s)\dd s+\sigma_k\left(R_k(s),T_k(s)\right)\dd B_k(s)
\eeq
with
\beq
\left|\sigma_k(R_k(s),T_k(s))\right|\leq \frac{{C}}{\sqrt\eps} u_k(s).
\eeq
As before we obtain
\begin{equation}
 u_k(s)\geq u_k(s_1)\exp\left(-(2+C^2/\eps)(s-s_1)+\int_{s_1}^s\tilde{\sigma}_k\dd B_k\right)
\end{equation}
for all $s\in\left[s_1,S\right]$. Again, we have $\tilde{\sigma}_k^2\leq C^2/\eps$, and thus
\begin{equation}
\int_{s_1}^{S}\tilde{\sigma}_k\dd B_k>-\infty
\end{equation}
for almost surely on $A$. Together with $u_k(s_1)>0$ on $A$ this means for $u_k(S)>0$ on $A$, contradicting the assumption $u_k(S)=0$. So our initial assumption is wrong and $S(\omega)$ has to be infinite for almost all $\omega\in\Omega$. This means that the initial ordering (\ref{initialordering}) stays conserved almost surely.
\end{proof}

\begin{thm}
\label{thmstartatone}
 Let $\beta\geq1$, $N\in\bbN$, and $T_k(0)=1$, $k=1,...,N$. Then there is a unique continuous $\left(T(s)\right)_{s\geq0}$ such that $T(s)\in D_N$ for all $s>0$ and $\left(T(s)\right)_{s>0}$ is a strong solution to the DMPK equation (\ref{dmpkeq}). Again, the law of this strong solution also defines the unique weak solution.
\end{thm}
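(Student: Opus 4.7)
The strategy is to construct the solution as the almost sure monotone limit of solutions with nondegenerate initial conditions, verify it solves (\ref{dmpkeq}) on $(0,\infty)$, and establish uniqueness by a localized Gronwall argument. For each $\delta\in(0,1/(N+1))$ put $T^{(\delta)}_k(0)=1-(N+1-k)\delta\in D_N$ and drive all approximations by the same Brownian motions $B_1,\ldots,B_N$. Theorem \ref{thmnocoll} then furnishes unique strong solutions $T^{(\delta)}$ with $T^{(\delta)}(s)\in D_N$ for every $s\geq 0$. Since $\delta_1<\delta_2$ gives $T^{(\delta_1)}_k(0)>T^{(\delta_2)}_k(0)$ componentwise, Lemma \ref{lmaconserveorder} yields $T^{(\delta_1)}_k(s)>T^{(\delta_2)}_k(s)$ for all $s\geq 0$ and $k$; combined with $T^{(\delta)}(s)\in(0,1)^N$, this provides an almost sure monotone pointwise limit $T(s):=\lim_{\delta\downarrow 0}T^{(\delta)}(s)\in[0,1]^N$, weakly ordered as $T_1(s)\leq\cdots\leq T_N(s)$.

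The hard step, which I expect to be the main technical obstacle, is to show $T(s)\in D_N$ for every $s>0$. The Lyapunov estimate underlying Theorem \ref{thmnocoll},
\begin{equation*}
\mathbb{E}\!\left[f\bigl(T^{(\delta)}(s\wedge S_M^{(\delta)})\bigr)\right]\leq f\bigl(T^{(\delta)}(0)\bigr)+CN^2 s,
\end{equation*}
is useless in the limit because $f(T^{(\delta)}(0))\to\infty$ as $\delta\to 0$. The observation I plan to exploit is that the underlying It\^{o} inequality is time-translation invariant:
\begin{equation*}
\mathbb{E}\!\left[f\bigl(T^{(\delta)}(s_2\wedge S_M^{(\delta)})\bigr)-f\bigl(T^{(\delta)}(s_1\wedge S_M^{(\delta)})\bigr)\right]\leq CN^2(s_2-s_1)
\end{equation*}
for $0<s_1<s_2$. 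Combining this with the negative repulsive drift of $f$ coming from the term $-\frac{4(\beta-1)}{\beta N+2-\beta}\sum_k\sum_{j\neq k}T_k^2(1-T_k)/(T_k-T_j)^2$ available for $\beta>1$ (and, for $\beta=1$, with an auxiliary Lyapunov function built from logarithmic differences weighted to remain bounded at $T=(1,\ldots,1)$), I aim to produce, along a subsequence $\delta_n\downarrow 0$ and at some arbitrarily small $s_1>0$, a uniform bound $\sup_n\mathbb{E}[f(T^{(\delta_n)}(s_1))]<\infty$. Fatou's lemma then forces $\mathbb{E}[f(T(s_1))]<\infty$, so $T(s_1)\in D_N$ almost surely. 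The strong Markov property and Theorem \ref{thmnocoll} applied with initial condition $T(s_1)$ then identify $(T(s))_{s\geq s_1}$ as the unique strong solution of (\ref{dmpkeq}) restarted at $s_1$; sending $s_1\downarrow 0$ along a countable sequence extends these properties to all $s>0$. Path continuity at $s=0$ is immediate from the sandwich $T^{(\delta)}_k(s)\leq T_k(s)\leq 1$: sending first $s\downarrow 0$ and then $\delta\downarrow 0$ forces $T_k(s)\to 1$.

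For uniqueness, let $T'$ be any continuous process with $T'(0)=(1,\ldots,1)$, $T'(s)\in D_N$ for $s>0$, solving (\ref{dmpkeq}) strongly there. Path continuity at zero guarantees $T'_k(\eta)>T^{(\delta)}_k(0)$ for every $k$ once $\delta,\eta$ are small enough, so Lemma \ref{lmaconserveorder} applied to the pair $(T^{(\delta)}(\cdot),T'(\eta+\cdot))$ gives $T'_k(\eta+s)\geq T^{(\delta)}_k(s)$ for all $s\geq 0$; sending $\delta\downarrow 0$ and $\eta\downarrow 0$ yields $T'\geq T$ componentwise. For the matching reverse bound I would introduce stopping times $\tilde\tau_\rho=\inf\{s>0:\mathrm{dist}(T(s),\partial D_N)\wedge\mathrm{dist}(T'(s),\partial D_N)\leq\rho\}$, on which the coefficients of (\ref{dmpkeq}) are Lipschitz with some constant $L_\rho$, and run a standard It\^{o}--Gronwall estimate on $\mathbb{E}[|T'(t\wedge\tilde\tau_\rho)-T(t\wedge\tilde\tau_\rho)|^2]$ between times $a>0$ and $s$. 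Dominated convergence as $a\downarrow 0$, using continuity of $T,T'$ at $0$, makes the initial data vanish, and sending $\rho\downarrow 0$ (with $\tilde\tau_\rho\nearrow\infty$) yields $T=T'$ on all of $[0,\infty)$. Weak uniqueness follows from the same argument applied to the weak uniqueness statement of Theorem \ref{thmnocoll}.
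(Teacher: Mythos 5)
Your overall scaffolding (approximating $T(0)=(1,\ldots,1)$ from below by $T^{(\delta)}(0)\in D_N$, using Lemma \ref{lmaconserveorder} to get monotonicity in $\delta$, passing to the almost sure pointwise limit, then identifying $T$ restricted to $[s_1,\infty)$ with the unique strong solution restarted from $T(s_1)$) is indeed the same as the paper's. The difference is entirely in how you plan to show $T(s_1)\in D_N$ almost surely, and it is exactly there that the argument has a genuine gap.

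Your plan is to establish a uniform-in-$\delta$ moment bound $\sup_\delta\mathbb{E}[f(T^{(\delta)}(s_1))]<\infty$ and then apply Fatou with lower semicontinuity of $f$. But the It\^{o} inequality you quote, even in its time-shifted form, reads
\begin{equation*}
\mathbb{E}\bigl[f(T^{(\delta)}(s_2\wedge S_M))\bigr]\leq\mathbb{E}\bigl[f(T^{(\delta)}(s_1\wedge S_M))\bigr]+CN^2(s_2-s_1),
\end{equation*}
so it merely propagates a bound forward; it cannot produce a finite bound at $s_1$ out of the diverging value $f(T^{(\delta)}(0))\sim N^2\log(1/\delta)$. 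You correctly observe that for $\beta>1$ there is an extra negative drift term $-\tfrac{4(\beta-1)}{\beta N+2-\beta}\sum_{k}\sum_{j\neq k}T_k^2(1-T_k)/(T_k-T_j)^2$ that could in principle beat the initial blowup, but you have not carried out the short-time analysis that would be needed to show it does (roughly, that the accumulated negative drift over $[0,s_1]$ cancels the $\log(1/\delta)$ divergence uniformly, which requires controlling the law of $T^{(\delta)}$ near the initial time). More seriously, for $\beta=1$ this negative drift term vanishes identically, and the Lyapunov function $f$ does strictly satisfy $\mathbb{E}[f(T^{(\delta)}(s\wedge S_M))]\leq f(T^{(\delta)}(0))+Cs\to\infty$ as $\delta\to 0$. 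You acknowledge needing an ``auxiliary Lyapunov function'' to close the $\beta=1$ case, but you never specify it, so the argument is incomplete precisely in the case that is hardest and is one of the two values of $\beta$ in the main theorem. The paper avoids this difficulty entirely: it first upgrades the pointwise limit to a locally uniform limit (using that $\sigma$ is controlled by $\sqrt{u^{(m,n)}}$, a martingale maximal inequality, Borel--Cantelli, and Gronwall), and then rules out $T$ lingering on $\partial D_N$ for $\beta\geq 1$ by a contradiction argument: if two components stick together on an interval $(a,b)$, the singular repulsion in the drift of $T^{(n)}_{k+1}$ blows up and forces $T^{(n)}_{k+1}(b)>1$; if $T_N\equiv 1$ on an interval, the drift of $T^{(n)}_N$ converges to a strictly negative constant while the diffusion coefficient goes to zero. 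Neither argument invokes $\beta>1$.

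Two further points. First, the step where you pass from ``$T(s_1)\in D_N$'' to ``$(T(s))_{s\geq s_1}$ equals the unique strong solution restarted at $s_1$'' via ``the strong Markov property'' is too terse: the limit process is not a priori a Markov process solving \eqref{dmpkeq}, and what is actually required is a continuous-dependence/Gronwall argument showing that $T^{(\delta)}(s_1)\to T(s_1)$ forces the corresponding solutions to merge, which is exactly what the paper carries out (with the $\sqrt{u}$ bound on $\sigma$). Second, your uniqueness argument using stopping times $\tilde\tau_\rho$ and $\mathbb{E}[|T'-T|^2]$ is workable in principle, but needlessly complicated: since you already have $T'\geq T$ componentwise, one can use $u=\sum_k(T'_k-T_k)$ and observe that $\sum_k v_k(T)$ is a polynomial (the singular terms telescope), so the drift bound $|f|\leq Cu$ holds globally without any stopping time, and $\mathbb{E}[u(t)]\leq\mathbb{E}[u(s)]e^{C(t-s)}\to 0$ as $s\downarrow 0$; this is the paper's simpler route.
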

\begin{proof}
 To approximate $T(0)$ by intial conditions $T^{(n)}(0)\in D_N$, define
\begin{equation}
 T^{(n)}_k(0)=1-\frac{N+1-k}{n}
\end{equation}
for all $n\geq N+1$. By Theorem \ref{thmnocoll}, those initial conditions allow for a solution $\left(T^{(n)}(s)\right)_{s\geq0}$ of (\ref{dmpkeq}) with $T^{(n)}(s)\in D_N$ for all $n$, $s$. By Lemma \ref{lmaconserveorder},
\begin{equation}
 T^{(m)}_k(s)<T^{(n)}_k(s)
\end{equation}
almost surely for all $N+1\leq m<n$, $s\geq0$, so the $T^{(n)}_k$ converge from below to a limit process $\left(T(s)\right)_{s>0}$ with $T_k(0)=1$ for all $k$ and $T_k(s)\in \overline{D_N}$ for all $s\geq0$.

This convergence result can be considerably strengthened. Define for $m<n$
\beq
u^{(m,n)}(t)=\sum_{k=1}^N\left|{T}^{(n)}_k(t)-T^{(m)}_k(t)\right|=\sum_{k=1}^N\left({T}^{(n)}_k(t)-T^{(m)}_k(t)\right)\geq0,
\eeq
which solves the stochastic differential equation
\beq
\dd u^{(m,n)}(t)=f\left(T^{(m)}(t),T^{(n)}(t)\right)\dd t+\sigma\left(T^{(m)}(t),T^{(n)}(t)\right)\dd B^{(m,n)}(t)
\eeq
with $B^{(m,n)}$ a Brownian motion. This time we estimate 
\begin{equation}
\label{fsigest}
 \begin{split}
  \left|f\left(T^{(m)}(t),T^{(n)}(t)\right)\right|&\leq C u^{(m,n)}(t)\\
 \left|\sigma\left({T}^{(m)}(t),T^{(n)}(t)\right)\right|&\leq{\tilde{C}}\sqrt{u^{(m,n)}(t)}
 \end{split}
\end{equation}
for all $t$, and only $N$-dependent constants $C,\tilde{C}$. Then the stochastic process
\beq
\begin{split}
u^{(m,n)}(s)-u^{(m,n)}(0)-\int_0^sf\left(T^{(m)}(t),T^{(n)}(t)\right)\dd t&=\int_0^s\sigma\left(T^{(m)}(t),T^{(n)}(t)\right)\dd B^{(m,n)}(t)\\
&=X_{m,n}(s)
\end{split}
\eeq
is a continuous martingale and its square $Y_{m,n}=X_{m,n}^2$ is a continuous submartingale.

As $u\leq N$, $\sigma^{m,n}(t)=\sigma\left(T^{(m)}(t),T^{(n)}(t)\right)$ is bounded, and as $T^{(m)}_k(s)\rightarrow T_k(s)$ from below for all $k$, $s$ as $m\rightarrow\infty$, we also have
\beq
\sup_{n> m}\left|\sigma^{m,n}(t)\right|\rightarrow 0 \hspace{5mm}(m\rightarrow\infty)
\eeq
almost surely and for all $t\geq0$. Using this and dominated convergence in the It\^{o} isometry, we obtain
\beq
\kappa_m=\sup_{n> m}\mean{Y_{m,n}(s)}=\sup_{n> m}\int_0^{s}\mean{\left|\sigma^{m,n}(t)\right|^2}\dd t\rightarrow0
\eeq
as $m\rightarrow\infty$.

The key idea for uniform convergence is Theorem 1.3.8 in \cite{ks}, which states that for any $\alpha>0$, $s\geq 0$
\beq
\sup_{n> m}\prob{\sup_{0\leq t\leq s} Y_{m,n}(t)\geq\alpha}\leq \sup_{n> m}\frac{\mean{Y_{m,n}(s)}}{\alpha}=\frac{\kappa_m}{\alpha}.
\eeq
Choosing $\eps_j>0$ and a subsequence $m_j\rightarrow\infty$ such that 
\beq
\sum_{j=1}^\infty\left(\frac{\kappa_{m_j}}{\eps_j}+\sqrt{\eps_j}\right)<\infty
\eeq
we obtain
\beq
\sum_{j=1}^{\infty}\prob{\sup_{0\leq t\leq s} Y_{m_j,m_{j+1}}(t)\geq\eps_j}\leq\sum_{j=1}^{\infty}\frac{\kappa_{m_j}}{\eps_j}<\infty.
\eeq
By the Borel-Cantelli lemma we thus have an almost surely finite $J(\omega)$ such that for all $j\geq J$
\beq
\sup_{0\leq t\leq s} Y_{m_j,m_{j+1}}(t)<\eps_j
\eeq
and thus for all $t\leq s$, $j\geq J$
\begin{equation}
\begin{split}
 u^{({m_j},{m_{j+1}})}(t)&\leq\sqrt{\eps_j}+u^{({m_j},{m_{j+1}})}(0)+C\int_0^tu^{({m_j},{m_{j+1}})}(\tau)\dd\tau\\
u^{({m_j},{m_{j+1}})}(t)&\leq\left(\sqrt{\eps_j}+u^{({m_j},{m_{j+1}})}(0)\right)e^{Ct},
\end{split}
\end{equation}
where we used the Gronwall inequality. With the monotonicity of the $T^{(m)}_k$ in $m$ and thus of $u^{(m,n)}$, we almost surely have
\beq
\begin{split}
\lim_{m\rightarrow\infty}\sup_{n\geq m}\sup_{0\leq t \leq s}u^{(m,n)}(t)&=\lim_{l\rightarrow\infty}\sup_{0\leq t \leq s}\sum_{j\geq l}u^{({m_j},{m_{j+1}})}(t)\\
&\leq\lim_{l\rightarrow\infty}e^{Cs}\left(\sum_{j\geq l}\sqrt{\eps_j}+\sum_{k=1}^N\left({T}_k(0)-T^{(m_l)}_k(0)\right)\right)=0
\end{split}
\eeq
and thus almost surely uniform convergence of $T^{(m)}_k$ to $T_k$ on compact intervals for all $k$. (This means the speed of convergence is uniform in compact intervals $[0,s]$, but might still depend on the particular path). In particular, all $T_k$ are continuous almost surely.

Now assume for a moment that we already know that for a certain $s_0>0$, $T(s_0)\in D_N$ almost surely. Then denote by $\left(\tilde{T}(t)\right)_{t\geq s_0}$ the unique strong solution of (\ref{dmpkeq}) starting at $s_0$ with $\tilde{T}(s_0)=T(s_0)$, which exists by Theorem \ref{thmnocoll}. 

\noindent{}Define $\tilde{S}_\eps=\inf\left\lbrace s\geq s_0:\tilde{T}_N(s)\geq1-\eps\right\rbrace$ for $\eps>0$ and 
\beq
u^{(m)}(t)=\sum_{k=1}^N\left(\tilde{T}_k(t)-T^{(m)}_k(t)\right)
\eeq
we have functions $f$, $\sigma$ and a Brownian motion $B^{(m)}$ such that
\begin{equation}
 \dd u^{(m)}(s)=f\left(\tilde{T}(s),T^{(m)}(s)\right)\dd s+\sigma\left(\tilde{T}(s),T^{(m)}(s)\right)\dd B^{(m)}(s)
\end{equation}
with the same estimates as in (\ref{estfandsig})
\begin{equation}
\begin{split}
 \left|f\left(\tilde{T}(s),T^{(m)}(s)\right)\right|&\leq C u^{(m)}(s)\\
\sigma\left(\tilde{T}(s),T^{(m)}(s)\right)&\leq \tilde{C}\frac{u^{(m)}(s)}{\sqrt{\eps}}
\end{split}
\end{equation}
on the event $\left\lbrace s_0\leq s\leq \tilde{S}_\eps\right\rbrace$. From these estimates and $\mean{u^{(m)}(s_0)^2}\rightarrow 0$ as $m\rightarrow\infty$, one can conclude by the standard Gronwall method that
\begin{equation}
 \mean{u^{(m)}\left(s\wedge \tilde{S}_\eps\right)^2}\rightarrow 0
\end{equation}
as $m\rightarrow0$ for all $s>s_0$ and all $\eps>0$. By the monotonicity of $u^{(m)}$ in $m$ we therefore also have $u^{(m)}\left(s\wedge \tilde{S}_\eps\right)\rightarrow0$ almost surely for all $s>s_0$ and $\eps>0$, and as $\tilde{S}_\eps\nearrow0$ almost surely as $\eps\rightarrow0$, even $u^{(m)}\left(s\right)\rightarrow0$ almost surely for all $s$. Thus $\left(T(t)\right)_{t\geq s_0}$ and $\left(\tilde{T}(t)\right)_{t\geq s_0}$ are modifications of each other, and by their continuity, they are indistinguishable.

To verify that $\left(T(s)\right)_{s>0}$ is a solution to (\ref{dmpkeq}), the only thing left to prove is that there is almost surely no $s_1>0$ such that $T(s)\in \overline{D_N}\setminus D_N$ for $s\in\left[0,s_1\right)$. First, almost surely $T_k(s)>T^{(1)}_k>0$ for all $s>0$, $k=1,...,N$, by Theorem \ref{thmnocoll} and Lemma \ref{lmaconserveorder}. Next, assume $0<T_k(s)<1$ for all $s>0$, and all $k$, but that with positive probability there is a neighborhood of zero where some of the components of $T$ ``touch''. Then, due to the continuity of $T$ as a locally uniform limit of continuous functions, it is possible to find a deterministic $k$, a fixed open interval $(a,b)\subset\bbR^+$ and a  fixed $\delta>0$ such that on an event $\Omega'$ with $\prob{\Omega'}>0$ for all $s\in(a,b)$, $T_k(s)=T_{k+1}(s)$, but $T_{k+1}(s)<T_{k+l}(s)-\delta$ for all $l\geq 2$, and $T_N(s)<1-\delta$. On this event $\Omega'$, with the uniform convergence of $T^{(n)}$ to $T$, we have for sufficiently large $n$, for all $s\in(a,b)$
\begin{equation}
 \dd T_{k+1}^{(n)}=f\left(T^{(n)}\right)\dd s+\sigma\left(T^{(n)}\right)\dd B_{k+1}(s)
\end{equation}
with $\sigma$ uniformly bounded but $f$ becoming more and more singular because of the $T_{k+1}^{(n)}$ and $T_{k}^{(n)}$ repulsion
\begin{equation}
 f\left(T^{(n)}\right)\geq C\left(\eps_n^{-1}-\delta^{-1}-1\right)
\end{equation}
with $\delta$ fixed while $\eps_n\rightarrow0$. Thus, with probability $\prob{\Omega'}>0$, there is an $n\in\bbN$ such that 
\begin{equation}
 T_{k+1}^{(n)}(b)>1
\end{equation}
which is not possible for a solution of (\ref{dmpkeq}) with intial conditions in $D_N$ due to Theorem \ref{thmnocoll}.

Finally, we have to exclude that there is a neighborhood of $0$ such that $T_N(s)=1$ for all $s\in\left[0,s_1\right)$. If so, there was an open interval $(a,b)$ again, a $k$ and a $\delta>0$ such that on a positive probability set $\Omega''$, $T_k(s)=1$ for all $s\in(a,b)$, but $T_{j}<1-\delta$ for all $j<k$. Then almost surely in $\Omega''$ we have the following convergences in
\begin{equation}
\begin{split}
 v_k\left(T^{(n)}\right)(s)&\leq-T^{(n)}_k+\frac{2T^{(n)}_k}{\beta N+2-\beta}\left(1-T^{(n)}_k+\frac{\beta}{2}\sum_{j< k}\frac{T^{(n)}_k+T^{(n)}_j-2T^{(n)}_kT^{(n)}_j}{T^{(n)}_k-T^{(n)}_j}\right)\\
&\rightarrow-1+\frac{\beta(k-1)}{\beta N+2-\beta}\\
&\leq-\frac{2}{\beta N+2-\beta}
\end{split}
\end{equation}
and 
\begin{equation}
 D_k\left(T^{(n)}\right)(s)\rightarrow 0
\end{equation}
for all $s\in(a,b)$ as $n\rightarrow\infty$. By It\^{o}'s isometry and dominated convergence,
\beq
\int_a^bD_k\left(T^{(n)}\right)(s)\dd B_k(s)\rightarrow 0
\eeq
in $L^2(\Omega'',\bbP)$ as $n\rightarrow0$ and thus almost surely on a subsequence $n_j$, but then
\begin{equation}
T_k(s)= \lim_{j\rightarrow\infty}T_k^{(n_j)}(s)\leq1-\frac{2}{\beta N+2-\beta}(s-a)
\end{equation}
almost surely on $\Omega''$ for all $s\in(a,b)$, contradicting our assumption.

\noindent{}Altogether, $\left(T(s)\right)_{s\geq0}$ is almost surely continuous with the right initial conditions, and $\left(T(s)\right)_{s>0}$ is a strong solution of (\ref{dmpkeq}). 

\hspace{6mm}

To finish the proof of Theorem \ref{thmstartatone}, we have to see why $T$ is the unique strong solution to (\ref{dmpkeq}) with the given initial conditions. Assume $\left(\overline{T}(s)\right)_{s\geq0}$ was another continuous process solving (\ref{dmpkeq}) for $s>0$ and starting with $\overline{T}_k(0)=1$ for all $k$. For fixed $n$, as $T^{(n)}_k(0)<1$ for all $k$, and by continuity of $\overline{T}$ and $T^{(n)}$, there is a random, but almost surely positive $S$ such that still
\begin{equation}
 T^{(n)}_k(s)<\overline{T}_k(s)
\end{equation}
for $k=1,...,N$ on the event $\lbrace{S\geq s}\rbrace$. As $\overline{T}$ and $T^{(n)}$ are solutions of (\ref{dmpkeq}) for all positive $s$, we apply Lemma \ref{lmaconserveorder} to obtain $T^{(n)}_k(s)<\overline{T}_k(s)$ almost surely for all $s\geq0$, and taking the limit in $n$, also
\begin{equation}
 T_k(s)\leq\overline{T}_k(s)
\end{equation}
for all $k$ and $s$.
This means we can use the random variable 
\begin{equation}
 u=\sum_{k=1}^N\left|\overline{T}_k-T_k\right|=\sum_{k=1}^N\left(\overline{T}_k-T_k\right)
\end{equation}
to control the distance of the two solutions. We have

\beq
\dd u(t)=f\left(\overline{T}(t),T(t)\right)\dd t+\sigma\left(\overline{T}(t),T(t)\right)\dd B(t)
\eeq
for all $t>0$, with $B$ a Brownian motion and 
\begin{equation}
 \begin{split}
  \left|f\left(\overline{T}(t),T(t)\right)\right|&\leq C u(t)\\
 \left|\sigma\left(\overline{T}(t),T(t)\right)\right|&\leq{\tilde{C}}\sqrt{u(t)}
 \end{split}
\end{equation}
with $N$-dependent constants $C,\tilde{C}$. We pick an $s>0$ and estimate
\beq
\label{estimateforu}
\mean{u(t)}=\int_s^t\mean{f\left(\overline{T}(\tau),T(\tau)\right)}\dd \tau\leq\mean{u(s)}\exp\left(C(t-s)\right)
\eeq
for all $t\geq s$. Now as $u\leq N$, and $u(s)\rightarrow0$ as $s\rightarrow0$ due to continuity of $T$ and $\overline{T}$, we have by dominated convergence in (\ref{estimateforu})
\beq
\mean{\left|u(t)\right|}=0
\eeq
for all $t\geq0$, and again by continuity of $T$ and $\overline{T}$, $T$ and $\overline{T}$ are undistinguishable. Thus we have obtained the unique strong solution by our construction.

For the proof of weak uniqueness, we note that there is an equivalent of Lemma \ref{lmaconserveorder} that allows us to compare a weak and a strong solution with initial data ordered like (\ref{initialordering}), which allows us to repeat the above argument with $\left(\overline{T},B\right)$ a weak solution of the DMPK equation for positive times, while $\overline{T}_k(0)=1$ for all $k$.
\end{proof}

\subsection{Equality in distribution}
\label{equal_dist}

Now we are ready to prove that the DMPK equation actually describes the process of the transmission eigenvalues.

Let $\beta=1$ or $\beta=2$, and $(\Omega,\bbP,\caF)$ be a probability space with a filtration $\left(\caF_s\right)_{s\geq0}$, and, using Brownian motions with respect to this filtration define $N\times N$ matrix valued processes $\mfa_+$, $\mfa_-$, $\mfb$ and $\tmfa_+$, $\tmfa_-$, $\tmfb$ distributed according to (\ref{abfromdbm}), with $(\mfa_+,\mfa_-,\mfb)$ independent of $(\tmfa_+,\tmfa_-,\tmfb)$, and both triples obeying the dependence/independence structure described under equation (\ref{abfromdbm}).

As it is convenient for later calculations, note the component-wise distribution of the matrix blocks: $\mfa_+$, $\mfa_-$, $\tmfa_+$ and $\tmfa_-$ are all equal in distribution with $\mu\nu$ entries
\begin{equation}
 \label{mfa}
\mfa_{\mu\nu}(s)=\Bigg\lbrace\begin{tabular}{l l}\ensuremath{1/\sqrt{2N}(B^R_{\mu\nu}(s)+iB^I_{\mu\nu}(s))}& \ensuremath{\mu<\nu}\\
\ensuremath{i/\sqrt{N}(B^I_{\mu\mu}(s))}&\ensuremath{\mu=\mu}\\
\ensuremath{-\overline{\mfa_{\nu\mu}(s)}}&\ensuremath{\mu>\nu}
\end{tabular}
\end{equation}
for $\mu,\nu=1,...,N$, with all  components of$B^R$ and $B^I$ independent, standard real Brownian motions. The blocks $\mfb$ and $\tmfb$ are equal in distribution with entries distributed like
\begin{equation}
 \label{mfb_0}
\mfb_{\mu\nu}(s)=\Bigg\lbrace\begin{tabular}{l l l l}\ensuremath{1/\sqrt{2(N+1)}(\hat{B}^R_{\mu\nu}(s)+i\hat{B}^I_{\mu\nu}(s))}& \ensuremath{\mu<\nu}&&\\
\ensuremath{\sqrt{1/(N+1)}(\hat{B}^R_{\mu\mu}(s)+i\hat{B}^I_{\mu\mu}(s))}&\ensuremath{\mu=\mu}&\hspace{1cm}&\ensuremath{(\beta=1)}\\
\ensuremath{\overline{\mfb_{\nu\mu}(s)}}&\ensuremath{\mu>\nu}&&
\end{tabular}
\end{equation}
or
\begin{equation}
\label{mfb_mag}
 \mfb_{\mu\nu}(s)=1/\sqrt{2N}(\hat{B}^R_{\mu\nu}(s)+i\hat{B}^I_{\mu\nu}(s))\hspace{1cm}(\beta=2),
\end{equation}
again with all components of $\hat{B}^R$ and $\hat{B}^I$ independent, standard real Brownian motions.

Define $\left(\caL(s)\right)_{s\geq 0}$ and $\left(\caM(s)\right)_{s\geq 0}$ by
\begin{equation}
 \caL(s)=\left(\begin{array}{cc} \mathfrak{a}_+(s) & \mathfrak{b}(s) \\[1mm] \mathfrak{b}^*(s) &{ \mathfrak{a}_-}(s)\end{array}\right)\
\end{equation}
and
\begin{equation}
\label{evolM2}
 \dd\caM(s)=\dd\caL(s)\caM(s)
\end{equation}
with initial condition $\caM(0)=1_{2N}$. Let $\lambda^{\caM}(s)\in\bbR^N$ be eigenvalues of $\caM_{++}^*\caM_{++}(s)$, ordered by $\lambda\upm_1\geq...\geq\lambda\upm_N$ and define the transmission eigenvalues as before by $T\upm_k={\left(\lambda\upm_k\right)}^{-1}$. 

On the other hand, define $N$ independent Brownian motions by (\ref{defBk}), so
\begin{equation}
\label{defineBM}
 B_k(s)=-\sqrt{{\beta(N-1)+2}}\Re\tilde{\mfb}_{kk}(s),
\end{equation}
and let $\left(T_k(s)\right)_{s\geq0}$, $k=1,...,N$ be the solution of the DMPK equation (\ref{dmpkeq}) driven by those $B_k$, starting from $T_k(0)=1$ for all $k$, as constructed in Theorem \ref{thmstartatone}.

\begin{thm}
 \label{thm_dmpkdist}
The distributions of $\left(T\upm(s)\right)_{s\geq0}$ and $\left(T(s)\right)_{s\geq0}$ in $\left(C_{\bbR^N}[0,\infty),\caB\left(C_{\bbR^N}[0,\infty)\right)\right)$ are identical.
\end{thm}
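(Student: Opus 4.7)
The plan is to approximate the degenerate initial condition $\caM(0)=1_{2N}$ by nondegenerate matrices $\caM^{(n)}(0)\in\tilde{\caG}_\beta$, rigorously identify the transmission-eigenvalue process of each $\caM^{(n)}$ as a weak solution of the DMPK equation (\ref{dmpkeq}), and pass to the limit $n\to\infty$ using the weak uniqueness from Theorem \ref{thmnocoll} together with the stability inherent in the construction of Theorem \ref{thmstartatone}.

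First, I would fix $\caM^{(n)}(0)\in\tilde{\caG}_\beta$ so that $T_k^{\caM^{(n)}}(0)=1-(N+1-k)/n$, matching the initial data used in Theorem \ref{thmstartatone}; an explicit admissible choice is $U_\pm=V_\pm=1_N$ in (\ref{singvalM}) with $\sqrt{\lambda_k^{(n)}(0)}=\sqrt{n/(n-N-1+k)}$, which indeed lies in $\tilde{\caG}_\beta$ for both $\beta=1,2$. Driving (\ref{evolM2}) with the \emph{same} $\caL$, the difference $\caM^{(n)}-\caM$ satisfies the same linear matrix-valued SDE with initial condition tending to zero, and a standard BDG estimate gives $\mean{\sup_{s\leq t}\|\caM^{(n)}(s)-\caM(s)\|^2}\to 0$ for every $t>0$. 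Along a subsequence this produces locally uniform almost-sure convergence $\caM^{(n)}\to\caM$, hence locally uniform convergence of laws $T^{\caM^{(n)}}\to T^{\caM}$ on path space.

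The main work is to identify, for each fixed $n$, the law of $T^{\caM^{(n)}}$. Since $\caM^{(n)}(0)\in\tilde{\caG}_\beta$, there is an $\caF_s$-stopping time $\tau^{(n)}>0$ up to which the eigenvalues of $\caM^{(n)*}_{++}\caM^{(n)}_{++}$ remain distinct and strictly $>1$, and analytic perturbation theory supplies smooth $\caF_s$-adapted unitaries $U_\pm^{(n)}(s)$, $V_\pm^{(n)}(s)$ in the decomposition (\ref{singvalM}). Up to $\tau^{(n)}$ the formal Itô derivation of the introduction becomes rigorous and produces (\ref{SDElambda}), hence (\ref{dmpkeq}) for $T^{\caM^{(n)}}$, with driving noise $\tilde B_k^{(n)}=-\sqrt{\beta(N-1)+2}\,\Re\tilde{\mfb}^{(n)}_{kk}$ built from $\dd\tilde{\mfb}^{(n)}=(U_+^{(n)})^*\,\dd\mfb\,U_-^{(n)}$. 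The invariance (\ref{eq_dLinvar}) applied to the $\caF_s$-predictable block-diagonal unitary $\caU^{(n)}(s)=\mathrm{blockdiag}(U_+^{(n)}(s),U_-^{(n)}(s))$ preserves the quadratic covariations of the entries of $\mfb$ pathwise, and Lévy's characterisation then identifies $(\tilde B_k^{(n)})_{k=1}^N$ as mutually independent standard real Brownian motions. Feeding $T^{\caM^{(n)}}$ into the Lyapunov function $f$ from (\ref{definitionf}) exactly as in the proof of Theorem \ref{thmnocoll} yields $\tau^{(n)}=\infty$ a.s., so $T^{\caM^{(n)}}$ is a weak solution of DMPK on $[0,\infty)$ starting from $T^{(n)}(0)$; weak uniqueness (Theorem \ref{thmnocoll}) then identifies its path-space law with that of the unique DMPK solution starting from $T^{(n)}(0)$.

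The final step is to pass to $n\to\infty$: the laws on the left converge to that of $T^{\caM}$ by the coupling above, while the laws on the right converge to that of $T$ by the monotone construction in the proof of Theorem \ref{thmstartatone}, yielding $T^{\caM}\stackrel{d}{=}T$. The hardest point will be the middle step: the invariance (\ref{eq_dLinvar}) is typically invoked for deterministic $\caU$, whereas here $\caU^{(n)}$ is $\caF_s$-adapted. The justification is that the quadratic covariations of $\mfb$ are deterministic multiples of Kronecker deltas and $\caU^{(n)}$ is predictable, so the covariation structure of $\tilde{\mfb}^{(n)}$ is preserved pathwise and Lévy's criterion still applies, rendering the identification of $(\tilde B_k^{(n)})$ as independent standard Brownian motions robust to the $\caM^{(n)}$-measurability of the rotation.
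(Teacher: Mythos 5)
Your approach is correct in substance, but it is genuinely different from the paper's. You take the \emph{analytic} (forward) route: perturb the initial condition to $\caM^{(n)}(0)\in\tilde{\caG}_\beta$, directly derive via It\^o and second-order perturbation theory that the transmission eigenvalue process of $\caM^{(n)}$ is a weak solution of the DMPK SDE driven by the Brownian motions $\tilde B^{(n)}_k=-\sqrt{\beta(N-1)+2}\,\Re\tilde{\mfb}^{(n)}_{kk}$ built from the same $\caL$, invoke weak uniqueness (Theorem \ref{thmnocoll}), and pass $n\to\infty$. The paper instead takes a \emph{synthetic} (backward) route: it fixes $\eps>0$, conditions on the event $A_\eps=\{T\upm(\eps)\in D_N\}$ (quantified in Lemma \ref{lma_TMinDN}), builds the DMPK solution $T^\eps$ from $T\upm(\eps)$ using an \emph{auxiliary, independent} noise $\tilde{\caL}$, reconstructs a matrix process $\caM^\eps=\caU^\eps\caS^\eps\caV^\eps$ whose factors solve explicit SDEs (\ref{defLLRR}) driven by $\tilde{\caL}$, verifies by It\^o that $\dd\caM^\eps=\caU^\eps\dd\tilde{\caL}\,\caU^{\eps*}\caM^\eps$, and deduces $\caM^\eps\stackrel{d}{=}\caM$ from an invariance/L\'evy argument; then $T^\eps\stackrel{d}{=}T\upm$ is immediate and one sends $\eps\to 0$.

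What each route buys: Your approach is conceptually more direct and closer to the classical McKean-style derivation for Dyson Brownian motion, and the coupling step $\caM^{(n)}\to\caM$ is trivially strong because $\caM^{(n)}(s)-\caM(s)$ factors through the common stochastic exponential. The paper's synthetic route is designed precisely to sidestep the one point you gloss over: the assertion that ``analytic perturbation theory supplies smooth $\caF_s$-adapted unitaries $U^{(n)}_\pm,V^{(n)}_\pm$'' hides a real issue, since the factorization (\ref{singvalM}) determines $U_\pm,V_\pm$ only up to a common diagonal phase gauge. You can repair this either by fixing a gauge via an auxiliary SDE for the phases, or — cleaner — by observing that every quantity entering the $\lambda_k$-SDE is gauge-invariant: $\lambda_k$, the first-order term $\tr(P_k\,\dd(\caM^*_{++}\caM_{++}))$ with $P_k$ the spectral projection, the second-order quadratic variations, and $\tilde{\mfb}^{(n)}_{kk}=v_k^*\caM^*_{++}\mfb\,\caM_{-+}v_k/\sqrt{\lambda_k(\lambda_k-1)}$ (the $e^{\pm i\theta_k}$ phases cancel). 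Once this is made explicit, your L\'evy argument goes through exactly as you describe, and indeed the paper itself implicitly relies on the same L\'evy reasoning in (\ref{incrLinvar}) where $\caU^\eps(t)$ is adapted and the increment of $\tilde{\caL}$ is independent of $\caF_t$. Your route also leans harder on the weak uniqueness statement of Theorem \ref{thmnocoll} (which is available), whereas the paper never needs to know a priori that $T\upm$ solves DMPK — it obtains equality of laws directly from $\caM^\eps\stackrel{d}{=}\caM$. In short, both require a L\'evy argument with adapted rotations and both use Theorem \ref{thmstartatone}'s monotone approximation for the final limit; your version is more elementary in structure but requires you to be careful about eigenvector/semimartingale regularity, which is exactly what the paper's reconstruction avoids.
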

\remark{}The Borel $\sigma$-field $\caB\left(C_{\bbR^N}[0,\infty)\right)$ is defined with respect to the metric of locally uniform convergence. For the statement of the theorem, we obviously do not need $T\upm$ and $T$ to be defined on the same probability space, but this will be very convenient for the proof.

The idea of the proof is similar to that for the Dyson Brownian Motion in \cite{agz}, essentially by reading the formal derivation of (\ref{dmpkeq}) backwards and making every step rigorous. To get started, note that we know that $T\upm(s)\in\overline{D_N}$, but even have
\begin{lma}
\label{lma_TMinDN}
 For $T\upm$ defined as above,
\begin{equation}
\label{TMinDN}
 \lim_{s\stackrel{>}{\rightarrow}0}\prob{T\upm(s)\in D_N}=1.
\end{equation} 
\end{lma}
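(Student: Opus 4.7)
The plan rests on identifying the leading-order small-$s$ behaviour of $\caM_{-+}^*\caM_{-+}(s)$, where the degeneracy of the initial condition $\caM(0)=1_{2N}$ is concentrated. From the singular-value decomposition (\ref{singvalM}),
\begin{equation*}
\caM_{-+}^*\caM_{-+}(s) = V_+^*(\Lambda^2 - 1_N)V_+,
\end{equation*}
so its eigenvalues are exactly $\lambda\upm_k(s)-1$, and $T\upm(s)\in D_N$ is equivalent to this matrix having $N$ strictly positive, mutually distinct eigenvalues. I shall prove that this is the case with probability tending to $1$ as $s\to 0^+$.

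First I expand the $-+$ block of $\caM$ by integrating (\ref{evolMdiff}) against $\caM(0)=1_{2N}$,
\begin{equation*}
\caM_{-+}(s) = \mfb^*(s) + R(s), \qquad R(s) := \int_0^s \dd\mfb^*(\tau)\bigl(\caM_{++}(\tau)-1_N\bigr) + \int_0^s \dd\mfa_-(\tau)\,\caM_{-+}(\tau).
\end{equation*}
Gronwall-type $L^2$ bounds for the linear SDE (\ref{evolM2}) give $\Emean{\|\caM_{++}(\tau)-1_N\|^2}\leq C\tau$ and $\Emean{\|\caM_{-+}(\tau)\|^2}\leq C\tau$ for small $\tau$, so the It\^o isometry yields $\Emean{\|R(s)\|^2}\leq Cs^2$ and hence $R(s)/\sqrt{s}\to 0$ in probability as $s\to 0^+$. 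By Brownian scaling $\mfb^*(s)/\sqrt{s}$ is distributed as $\mfb^*(1)$ for every $s>0$, so this family has an $s$-independent (hence tight) law.

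The decisive input is a random-matrix fact. From (\ref{mfb_0}), (\ref{mfb_mag}), $\mfb(1)$ is a complex Gaussian matrix --- a rescaled complex Ginibre matrix for $\beta=2$ and a rescaled complex-symmetric Gaussian for $\beta=1$. In either case the joint law of the eigenvalues of $\mfb(1)\mfb^*(1)$ admits a density containing a Vandermonde factor (the classical Wishart ensemble for $\beta=2$, a Laguerre-type ensemble via the Takagi decomposition for $\beta=1$), so $\mfb(1)\mfb^*(1)$ almost surely has $N$ distinct strictly positive eigenvalues. Equivalently, the subset of positive semidefinite Hermitian matrices with a zero or repeated eigenvalue is a closed, proper subvariety of Lebesgue measure zero, while the laws above are absolutely continuous on the appropriate parameter spaces and therefore assign it zero mass.

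To conclude, I expand
\begin{equation*}
\caM_{-+}^*\caM_{-+}(s)/s = \mfb(s)\mfb^*(s)/s + \bigl(\mfb(s)R(s) + R^*(s)\mfb^*(s)\bigr)/s + R^*(s)R(s)/s.
\end{equation*}
The leading term has the fixed law of $\mfb(1)\mfb^*(1)$ for every $s>0$, while the cross and quadratic remainders vanish in probability as $s\to 0^+$ by tightness of $\mfb^*(s)/\sqrt{s}$ combined with $R(s)/\sqrt{s}\to 0$ in probability. The set of Hermitian matrices with $N$ simple strictly positive eigenvalues is open and contains the support of the limiting law, so by the portmanteau theorem $\prob{T\upm(s)\in D_N}\to 1$, which is (\ref{TMinDN}). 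The most delicate step of this plan is the random-matrix fact that $\mfb(1)\mfb^*(1)$ almost surely has simple positive spectrum in both symmetry classes; once that is in hand, the remainder is routine It\^o-isometry bookkeeping together with soft spectral continuity.
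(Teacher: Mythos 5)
Your proposal is correct and follows essentially the same strategy as the paper: extract $\mfb(s)\mfb(s)^*$ as the leading-order piece of $\caM_{-+}^*\caM_{-+}(s)$, show the remainder is of lower order in $s$ via It\^{o} isometry and Gronwall, and invoke the almost-sure simplicity and positivity of the spectrum of $\mfb(1)\mfb(1)^*$ (which the paper, like your fallback argument, proves via a nonvanishing-polynomial/zero-Lebesgue-measure observation on the entries of $\mfb(1)$, following Lemma 2.5.5 of \cite{agz}). The only cosmetic difference is the finish: you rescale by $s$ and apply Slutsky plus the portmanteau theorem to the open set of matrices with simple positive spectrum, whereas the paper keeps things unscaled, sets $\delta(s)=\caO(s)$ for the spectral gap of $\mfb(s)\mfb(s)^*$, and closes with a Markov estimate plus Kato's perturbation theorem.
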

\remark{}Once we have shown Theorem \ref{thm_dmpkdist}, we will actually know much more, namely
\begin{equation}
 \prob{\left\lbrace T\upm(s)\in D_N \forall s>0\right\rbrace}=1.
\end{equation}
For the moment, we morally need $\prob{T\upm(s)\in D_N}=1$ for any \emph{fixed} time $s>0$. This could be achieved by writing (\ref{evolM2}) as a Brownian motion on the manifold of transfer matrices, and then using the smoothness of the heat kernel \cite{dodziuk,grigoryan} to verify that the lower-dimensional manifold corresponding to $T\upm(s)\in\overline{D_N}\setminus D_N$ has probability zero. Instead, we use the weaker statement (\ref{TMinDN}) which can be proven directly and suffices as well.

\begin{proof}
 $T\upm(s)\in D_N$ is equivalent to $\lambda\upm\in (1,\infty)^N$ and nondegenerate, and finally, that the matrix $\caM_{-+}^*\caM_{-+}$ has positive, finite and nodegenerate eigenvalues. By the expansion 
 \begin{equation}
 \label{seriesM}
\caM(s)=1_{2N}+\int_0^s\dd\caL(t_1)+\sum_{l=2}^{\infty}\int_0^s\dd\caL(t_1)\int_0^{t_1}\dd\caL(t_2)...\int_0^{t_{l-1}}\dd\caL(t_l)
\end{equation}
we see that $\caM$ is certainly almost surely bounded and that for any $0\leq s<1$
\begin{equation}
 \caM_{-+}^*(s)\caM_{-+}(s)=\mfb(s)\mfb(s)^*+K(s)
\end{equation}
with $K(s)\in\bbC^{N\times N}$ hermitian and $\mean{\left\|K(s)\right\|}\leq C s^{3/2}$, while
\begin{equation}
 \mfb(s)\mfb(s)^*\stackrel{d}{=}s\mfb(1)\mfb(1)^*.
\end{equation}
Now once we see that $\mfb(1)\mfb(1)^*$ has almost surely positive and nondegenerate eigenvalues, we can denote the minimum of eigenvalue spacing and distance to zero of $\mathrm{spec}\left(\mfb(s)\mfb(s)^*\right)$ by $\delta(s)$, which is $\caO(s)$, and get
\begin{equation}
 \lim_{s\stackrel{>}{\rightarrow}0}\prob{\left\lbrace\left\|K(s)\right\|<\delta(s)/2\right\rbrace}=1
\end{equation}
by a Markov estimate. Then perturbation theory for the spectrum of hermitian matrices (\cite{kato}, Chapter V., Theorem 4.10) proves Lemma \ref{lma_TMinDN}.

To see that $\mfb(1)\mfb(1)^*$ is positive and nondegenerate almost surely, we proceed as in the proof of Lemma 2.5.5. in \cite{agz}. Let $p$ be the characteristic polynomial of $\mfb(1)\mfb(1)^*$, and $q(\lambda)=\lambda p(\lambda)$. $\mfb(1)\mfb(1)^*$ has a degenerate or zero eigenvalue if and only if $q$ has a multiple zero, which is when the discriminant $D(q)$ is zero (\cite{agz}, A.4). $D(q)$ is a polynomial in the coefficients of $q$, and thus a polynomial in the coefficients of $\mfb(1)$, which does not vanish identically (as there are of course nondegenerate positive realizations of $\mfb(1)\mfb(1)^*$). But as the probability density of the entries of $\mfb(1)$ is continuous, $\prob{D(q)=0}=0$, because zero sets of nonvanishing polynomials have Lebesgue measure zero.
\end{proof}

\noindent{}Thus we know that if we define for any $\eps>0$ the event
\begin{equation}
 A_\eps=\left\lbrace\omega\in\Omega:T\upm(\eps)\in D_N\right\rbrace
\end{equation}
then $\prob{A_\eps}\rightarrow 1$ as $\eps\rightarrow 0$. For the time being we fix $\eps>0$ and start the equation (\ref{dmpkeq}) on the set $A_\eps$ from $s=\eps$ with initial data $T^\eps(\eps)=T\upm(\eps)$ to obtain a unique strong solution $\left(T^\eps(s;\omega)\right)_{s\geq \eps, \omega\in A_\eps}$. The paths of $T^\eps$ stay inside $D_N$ for all $s\geq\eps$ almost surely on $A_\eps$ by Theorem \ref{thmnocoll}. To define $T^\eps$ for all times and on the whole probability space, let $T^\eps(s;\omega)=T\upm(s;\omega)$ if $0\leq s\leq\eps$ or $\omega\notin A_\eps$. $T^\eps$ is a process with almost surely continuous paths. Of course, $T^\eps$ gives rise to processes $\lambda^\eps_k=\left(T^\eps_k\right)^{-1}$. From the well-posedness of the DMPK equation for $T^\eps$ on $A_\eps$ for $s\geq\eps$, we see by an application of the It\^{o} formula that $\lambda^\eps$ solve (\ref{SDElambda}) on $A_\eps$ for $s\geq\eps$ with initial condition $\lambda^\eps(\eps)=\lambda\upm(\eps)$. As a consequence, the following $N\times N$ matrix-valued processes are well-defined and have continuous semimartingale entries: On $A_\eps$, start with $L^\pm_\eps(\eps)=0$, $R^\pm_\eps(\eps)=0$, and let for $s\geq\eps$
\begin{equation}
\label{defLLRR}
\begin{split}
 \dd L^+_{\mu\nu}&=\dd \tmfa_{+,\mu\nu}+\delta_{\mu\neq\nu}\left(\frac{\sqrt{\lambda_\nu^2-\lambda_\nu}\dd\tmfb_{\mu\nu}+\sqrt{\lambda_\mu^2-\lambda_\mu}\dd\overline{\tmfb_{\nu\mu}}}{\lambda_\nu-\lambda_\mu}\right)+i\delta_{\mu\nu}\frac{2\lambda_\nu-1}{2\sqrt{\lambda_\nu^2-\lambda_\nu}}\dd\Im\tmfb_{\nu\nu}\\
 \dd L^-_{\mu\nu}&=\dd \tmfa_{-,\mu\nu}+\delta_{\mu\neq\nu}\left(\frac{\sqrt{\lambda_\nu^2-\lambda_\nu}\dd\overline{\tmfb_{\nu\mu}}+\sqrt{\lambda_\mu^2-\lambda_\mu}\dd\tmfb_{\mu\nu}}{\lambda_\nu-\lambda_\mu}\right)-i\delta_{\mu\nu}\frac{2\lambda_\nu-1}{2\sqrt{\lambda_\nu^2-\lambda_\nu}}\dd\Im\tmfb_{\nu\nu}\\
 \dd R^+_{\mu\nu}&=\delta_{\mu\neq\nu}\left(\frac{\sqrt{\lambda_\mu(\lambda_\nu-1)}\dd\tmfb_{\mu\nu}+\sqrt{\lambda_\nu(\lambda_\mu-1)}\dd\overline{\tmfb_{\nu\mu}}}{\lambda_\mu-\lambda_\nu}\right)-i\delta_{\mu\nu}\frac{1}{2\sqrt{\lambda_\nu^2-\lambda_\nu}}\dd\Im\tmfb_{\nu\nu}\\
 \dd R^-_{\mu\nu}&=\delta_{\mu\neq\nu}\left(\frac{\sqrt{\lambda_\mu(\lambda_\nu-1)}\dd\overline{\tmfb_{\nu\mu}}+\sqrt{\lambda_\nu(\lambda_\mu-1)}\dd\tmfb_{\mu\nu}}{\lambda_\mu-\lambda_\nu}\right)+i\delta_{\mu\nu}\frac{1}{2\sqrt{\lambda_\nu^2-\lambda_\nu}}\dd\Im\tmfb_{\nu\nu},
\end{split}
\end{equation}
where we suppressed the $\eps$-dependence of $\lambda,L,R$ for a moment. Note that all four matrices are skew-hermitian, and furthermore, for $\beta=1$, also $L^+_\eps=\overline{L^-_\eps}$ and $R^+_\eps=\overline{R^-_\eps}$.

Now decompose $\caM(\eps)$ as in (\ref{singvalM}), and define matrices $U_\pm^\eps(s)$, $V_\pm^\eps(s)$ for $s\geq\eps$ on $A_\eps$ as the unique strong solutions of
\begin{equation}
 \begin{split}
  \dd U_\pm^\eps(s)&=U_\pm^\eps(s)\left(\dd L^\pm_\eps(s)-\frac{1}{2}\dd L^\pm_\eps(s)\dd L^\pm_\eps(s)^*\right),\hspace{1cm}U_\pm^\eps(\eps)=U_\pm(\eps)\\
\dd V_\pm^\eps(s)&=\left(\dd R^\pm_\eps(s)-\frac{1}{2}\dd R^\pm_\eps(s)^*\dd R^\pm_\eps(s)\right)V_\pm^\eps(s),\hspace{1cm}V_\pm^\eps(\eps)=V_\pm(\eps).
 \end{split}
\end{equation}
Existence and uniqueness of these solutions can be easily obtained as long as the denominators in the coefficients of (\ref{defLLRR}) do not get too singular, which in turn can be controlled by the stopping-time argument introduced in the proof of Theorem \ref{thmnocoll}. Furthermore, we have
\begin{lma}
 All four matrices $U_\pm^\eps$, $V_\pm^\eps$ are unitary. Also, for $\beta=1$, $\overline{U_+^\eps}={U_-^\eps}$ and $\overline{V_+^\eps}={V_-^\eps}$.
\end{lma}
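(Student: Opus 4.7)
The plan is to verify the unitarity of each $U_\pm^\eps, V_\pm^\eps$ by an It\^o computation that exploits the skew-hermiticity of the driving processes $L^\pm_\eps, R^\pm_\eps$. The first step is to observe that the matrices defined by (\ref{defLLRR}) are skew-hermitian: for the diagonal entries this is immediate because $d\tmfa_{\pm,\nu\nu}$ is pure imaginary (by the explicit form (\ref{mfa})) and the correction term $\pm i(2\lambda_\nu-1)(2\sqrt{\lambda_\nu^2-\lambda_\nu})^{-1}d\Im\tmfb_{\nu\nu}$ is pure imaginary by construction; for the off-diagonal entries one checks directly from (\ref{defLLRR}) that the $(\mu,\nu)$ entry equals minus the complex conjugate of the $(\nu,\mu)$ entry, using that $\tmfa_\pm$ are skew-hermitian (entry-wise as in (\ref{mfa})) and that the two combinations of $d\tmfb_{\mu\nu}$ and $d\overline{\tmfb_{\nu\mu}}$ exchange roles after conjugation and interchange of $\mu,\nu$. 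Thus $(dL^\pm_\eps)^*=-dL^\pm_\eps$ and similarly for $dR^\pm_\eps$.

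With skew-hermiticity established, I would apply It\^o's formula to $U_\pm^\eps (U_\pm^\eps)^*$, using that the defining SDE has precisely the ``Stratonovich correction'' form needed to preserve unitarity. Concretely, one expands
\begin{equation*}
d\bigl(U(U)^*\bigr)=(dU)U^*+U(dU^*)+(dU)(dU^*),
\end{equation*}
substitutes $dU=U(dL-\tfrac12 dL\,dL^*)$ and $dU^*=(dL^*-\tfrac12 dL\,dL^*)U^*$ (using $(dL\,dL^*)^*=dL\,dL^*$), and uses $dL^*=-dL$ to cancel the first-order contributions. The It\^o bracket $(dU)(dU^*)$ contributes $U(dL\,dL^*)U^*$ to leading order, which exactly cancels the two Stratonovich correction terms $-\tfrac12 U(dL\,dL^*)U^*$. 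Since $U_\pm^\eps(\eps)=U_\pm(\eps)$ is unitary by the singular value decomposition (\ref{singvalM}), we obtain $U_\pm^\eps(s)(U_\pm^\eps(s))^*=\mathbf 1_N$ for all $s\geq\eps$ on $A_\eps$. Before carrying out the It\^o expansion globally, one should introduce a stopping time $S_M$ (analogous to the one used in the proof of Theorem \ref{thmnocoll}) keeping the coefficients in (\ref{defLLRR}) bounded; passing to the limit $M\to\infty$ is harmless because $\lambda_\mu-\lambda_\nu$ stays uniformly away from zero on compact time intervals, again by Theorem \ref{thmnocoll}. The computation for $V_\pm^\eps$ is symmetric: because the SDE for $V$ multiplies from the left, one checks $d((V)^*V)=0$ by the same algebra with $R^\pm$ playing the role of $L^\pm$.

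For the conjugation relations at $\beta=1$, the idea is to show that $\overline{U_+^\eps}$ solves the same SDE as $U_-^\eps$ (with the same initial condition) and then invoke strong uniqueness. Taking complex conjugates in (\ref{defLLRR}) and using the $\beta=1$ symmetries $\tmfa_+=\overline{\tmfa_-}$ together with the symmetry of $\tmfb$ (so that $d\tmfb_{\nu\mu}=d\tmfb_{\mu\nu}$), a direct term-by-term comparison yields $\overline{dL^+_\eps}=dL^-_\eps$ and $\overline{dR^+_\eps}=dR^-_\eps$; combined with $(dL\,dL^*)^*=dL\,dL^*$ and $\overline{dL^+\,(dL^+)^*}=dL^-\,(dL^-)^*$, the SDE for $\overline{U_+^\eps}$ becomes identical to that of $U_-^\eps$. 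Since the initial conditions match, $\overline{U_+(\eps)}=U_-(\eps)$ from the $\beta=1$ structure of (\ref{singvalM}), uniqueness forces $\overline{U_+^\eps}=U_-^\eps$; the same reasoning applied on the right yields $\overline{V_+^\eps}=V_-^\eps$. The main bookkeeping obstacle is verifying the skew-hermiticity in the first step, but once this is in place the rest is a mechanical It\^o computation.
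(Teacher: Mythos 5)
Your argument is correct and follows essentially the same route as the paper: apply It\^o's formula to $U_\pm^\eps U_\pm^{\eps*}$ (and $V_\pm^{\eps*}V_\pm^\eps$), observe that the Stratonovich-type correction terms cancel the It\^o bracket, leaving $U(\dd L+\dd L^*)U^*=0$ by skew-hermiticity, and for $\beta=1$ show $\overline{U_+^\eps}$ satisfies the SDE for $U_-^\eps$ with matching initial data and invoke uniqueness. The additional points you flag --- entrywise verification of skew-hermiticity of $L^\pm_\eps,R^\pm_\eps$, localization via a stopping time to keep the $(\lambda_\mu-\lambda_\nu)^{-1}$ coefficients bounded, and the $\overline{\dd L^+_\eps}=\dd L^-_\eps$ identity from the $\beta=1$ symmetries of $\tmfa_\pm,\tmfb$ --- are implicitly used but not spelled out in the paper's very short proof, so they are welcome elaborations rather than deviations.
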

\begin{proof}
 For the first statement, consider, say $U_\pm^\eps$. $U_\pm^\eps(\eps)=U_\pm(\eps)$ is unitary by definition. On $A_\eps$, It\^{o}'s formula yields for all $s\geq\eps$
\begin{equation}
\begin{split}
 \dd\left(U_\pm^{\eps}U_\pm^{\eps*}\right)&=U_\pm^\eps\left(\dd L^\pm_\eps-\frac{1}{2}\dd L^\pm_\eps\dd L^{\pm*}_\eps\right)U_\pm^{\eps*}+U_\pm^\eps\left(\dd L^{\pm*}_\eps-\frac{1}{2}\dd L^\pm_\eps \dd L^{\pm*}_\eps\right)U_\pm^{\eps*}\\&\hspace{2cm}+U_\pm^\eps\dd L^\pm_\eps\dd L^{\pm*}_\eps U_\pm^{\eps*}\\
&=0
\end{split}
\end{equation}
by the skew-symmetry of $L^\pm_\eps$. In addition, if $\beta=1$, we have 
\begin{equation}
\overline{U_+^\eps(\eps)}=\overline{U_+(\eps)}={U_-}(\eps)={U_-^\eps}(\eps),
\end{equation}
and, by $\overline{L^+_\eps}={L^-_\eps}$,
\begin{equation}
 \dd\overline{ U_+^\eps}=\overline{U_+^\eps}\left(\dd \overline{L^+_\eps}-\frac{1}{2}\dd \overline{L^+_\eps}\dd \overline{L^+_\eps}^*\right)=\overline{U_+^\eps}\left(\dd {L^-_\eps}-\frac{1}{2}\dd {L^-_\eps}\dd {L^{-*}_\eps}\right),
\end{equation}
so $\overline{U_+^\eps}$ solves the SDE for ${U_-^\eps}$. The proof for the $V_\pm^\eps$ matrices is analogous.
\end{proof}

On $A_\eps$ for $s\geq\eps$ we now write
\begin{equation}
\begin{split}
 \caU^\eps&(s)=\begin{pmatrix} U^\eps_{+}(s)&0\\0&U^\eps_{-}(s)\end{pmatrix}\\
 \caV^\eps&(s)=\begin{pmatrix} V^\eps_{+}(s)&0\\0&V^\eps_{-}(s)\end{pmatrix}\\
 \caS^\eps&(s)=\begin{pmatrix} \Lambda^\eps(s)&\sqrt{\Lambda^\eps(s)^2-1}\\\sqrt{\Lambda^\eps(s)^2-1}&\Lambda^\eps(s)\end{pmatrix}
\end{split}
\end{equation}
with $\Lambda^\eps$ a diagonal matrix with entries $\Lambda^\eps_{\nu\nu}=\sqrt{\lambda^\eps_\nu}$. Now we are ready to replace $\caM$ by the process $\caM^\eps$ with
\begin{equation}
 \begin{split}
  \caM^\eps(s;\omega)&=\caM(s;\omega)\hspace{2.5cm}\mbox{for }0\leq s\leq \eps\mbox{ or }\omega\notin A_\eps\\
\caM^\eps(s;\omega)&=\caU^\eps(s;\omega)\caS^\eps(s;\omega)\caV^\eps(s;\omega)\hspace{1cm}\mbox{otherwise. }
 \end{split}
\end{equation}
Also, let
\begin{equation}
 \tilde{\caL}(s)=\left(\begin{array}{cc} \tilde{\mathfrak{a}}_+(s) & \tilde{\mathfrak{b}}(s) \\[1mm] \tilde{\mathfrak{b}}^*(s) &{ \tilde{\mathfrak{a}}_-}(s)\end{array}\right).
\end{equation}
Now as $\caU^\eps(s)$, $\caS^\eps(s)$, $\caV^\eps(s)$ are all well defined on $A_\eps$ as semi-martingales for $s\geq\eps$, we can calculate the dynamics of $\caM^\eps(s)$ with the It\^{o} formula. In the calculation below, all equations hold as stochastic differential equations on the event $A_\eps$ starting from time $\eps$. Suppressing the $\eps$- and $s$- dependence, denote
\begin{equation}
 \caA=\begin{pmatrix} L^+&0\\0&L^-\end{pmatrix}\hspace{12mm}\caB=\begin{pmatrix} R^+&0\\0&R^-\end{pmatrix},
\end{equation}
so that
\begin{equation}
 \dd\caU=\caU\left(\dd\caA+\frac{1}{2}\dd\caA\dd\caA\right)\hspace{8mm}\dd\caV=\left(\dd\caB+\frac{1}{2}\dd\caB\dd\caB\right)\caV,
\end{equation}
where we used the skew-symmetry of $\caA$ and $\caB$. As $\dd\caS$ is a matrix consisting of four diagonal blocks, we only give the $\nu$-th diagonal element of each block, reading
\begin{equation}
 \left(\dd\caS\right)_\nu=\begin{pmatrix} \frac{\dd\lambda_\nu}{2\sqrt{\lambda_\nu}}&\frac{\dd\lambda_\nu}{2\sqrt{\lambda_\nu-1}}\\\frac{\dd\lambda_\nu}{2\sqrt{\lambda_\nu-1}}&\frac{\dd\lambda_\nu}{2\sqrt{\lambda_\nu}}\end{pmatrix}-\frac{1}{2\beta(N-1)+4}\begin{pmatrix}\frac{\lambda_\nu-1}{\sqrt{\lambda_\nu}}&\frac{\lambda_\nu}{\sqrt{\lambda_\nu-1}}\\\frac{\lambda_\nu}{\sqrt{\lambda_\nu-1}}&\frac{\lambda_\nu-1}{\sqrt{\lambda_\nu}}\end{pmatrix}\dd s.
\end{equation}
After splitting $\caS$ into its continuous local martingale part $\caS_1$ and a finite variation part $\caS_2$, we have
\begin{equation}
\label{splitS}
 \begin{split}
  \left((\dd\caS_1)\caS^{-1}\right)_\nu&=\begin{pmatrix} 0&\dd\Re\tmfb_{\nu\nu}\\\dd\Re\tmfb_{\nu\nu}&0\end{pmatrix}
\end{split}
\end{equation}
and
\begin{equation}
\label{dS2}
\begin{split}
&\left((\dd\caS_2)\caS^{-1}\right)_\nu\\&\hspace{0.5cm}=\begin{pmatrix} 0&\frac{1}{2{\sqrt{\lambda_\nu^2-\lambda_\nu}}}\\\frac{1}{2{\sqrt{\lambda_\nu^2-\lambda_\nu}}}&0\end{pmatrix}\left(2\lambda_\nu-1+\frac{\beta}{\beta(N-1)+2}\sum_{\rho\neq \nu}\frac{2\lambda_\rho\lambda_\nu-\lambda_\nu-\lambda_\rho}{\lambda_\nu-\lambda_\rho}\right)\dd s\\&\hspace{1cm}+\frac{1}{2\beta(N-1)+4}\begin{pmatrix}1&-\frac{2\lambda_\nu-1}{\sqrt{\lambda_\nu^2-\lambda_\nu}}\\-\frac{2\lambda_\nu-1}{\sqrt{\lambda_\nu^2-\lambda_\nu}}&1\end{pmatrix}\dd s\\
 &\hspace{5mm}=\frac{1}{2\beta(N-1)+4}\begin{pmatrix} 1&0\\0&1\end{pmatrix}\dd s\\
&\hspace{1cm}+\frac{1}{\beta(N-1)+2}\begin{pmatrix} 0&\frac{2\lambda_\nu-1}{2\sqrt{\lambda_\nu^2-\lambda_\nu}}+\beta\sum_{\rho\neq\nu}\frac{\sqrt{\lambda_\nu^2-\lambda_\nu}}{\lambda_\nu-\lambda_\rho}\\\frac{2\lambda_\nu-1}{2\sqrt{\lambda_\nu^2-\lambda_\nu}}+\beta\sum_{\rho\neq\nu}\frac{\sqrt{\lambda_\nu^2-\lambda_\nu}}{\lambda_\nu-\lambda_\rho}&0\end{pmatrix}\dd s
\end{split}
\end{equation}
where a $\caS^{-1}$ factor was included for later convenience. Comparing (\ref{splitS}) to (\ref{defLLRR}), it is clear that the bracket processes $\dd\caA\dd\caS=\dd\caB\dd\caS=0$ vanish, and thus the It\^{o} formula for $\caM^\eps$ reads
\begin{equation}
\label{calcdM}
\begin{split}
 \dd\caM^\eps&=\dd(\caU\caS\caV)=(\dd\caU)\caS\caV+\caU(\dd\caS)\caV+\caU\caS(\dd\caV)+(\dd\caU)\caS(\dd\caV)\\
&=\caU(\dd\caA)\caS\caV+\caU(\dd\caS_1)\caV+\caU\caS(\dd\caB)\caV\\
&\hspace{1cm}+\caU(\dd\caS_2)\caV+\frac{1}{2}\caU(\dd\caA\dd\caA)\caS\caV+\frac{1}{2}\caU\caS(\dd\caB\dd\caB)\caV+
\caU(\dd\caA)\caS(\dd\caB)\caV.
\end{split}
\end{equation}
By a straightforward computation using (\ref{defLLRR}) and (\ref{splitS}), we see for the second line
\begin{equation}
 \label{martpart}
\caU(\dd\caA)\caS\caV+\caU(\dd\caS_1)\caV+\caU\caS(\dd\caB)\caV=\caU(\dd\tilde{\caL})\caS\caV.
\end{equation}
For the third line of (\ref{calcdM}), we have the following trick
\begin{equation}
 (\dd\caA)\caS\dd\caB=(\dd\caA)(\dd\caS_1+\caS\dd\caB)=(\dd\caA)(\dd\tilde{\caL}-\dd\caA)\caS
\end{equation}
by (\ref{martpart}), and analogously,
\begin{equation}
 \caS(\dd\caB\dd\caB)=(\dd\caS_1+\caS\dd\caB)(\dd\caB)=(\dd\tilde{\caL}-\dd\caA)\caS\dd\caB=(\dd\tilde{\caL}-\dd\caA)(\dd\tilde{\caL}-\dd\caA)\caS-\dd\tilde{\caL}\dd\caS_1.
\end{equation}
So far,
\begin{equation}
\label{sofar}
\begin{split}
 \dd\caM^\eps=\caU&(\dd\tilde{\caL})\caS\caV\\&+\frac{1}{2}\caU(\dd\tilde{\caL}\dd\tilde{\caL})\caS\caV+\caU\left((\dd\caS_2)\caS^{-1}+\frac{1}{2}(\dd\caA\dd\tilde{\caL}-\dd\tilde{\caL}\dd\caA)-\frac{1}{2}\dd\tilde{\caL}(\dd\caS_1)\caS^{-1}\right)\caS\caV.
\end{split}
\end{equation}
By definition of $\tmfa_+,\tmfa_-,\tmfb$, $\dd\tilde{\caL}\dd\tilde{\caL}=0$, while
\begin{equation}
 -\frac{1}{2}\dd\tilde{\caL}(\dd\caS_1)\caS^{-1}=-\frac{1}{2\beta(N-1)+4}\begin{pmatrix} 1_N&0\\0&1_N\end{pmatrix}\dd s
\end{equation}
and
\begin{equation}
\begin{split}
 \frac{1}{2}(\dd\caA\dd\tilde{\caL}&-\dd\tilde{\caL}\dd\caA)=\frac{1}{2}\left(\begin{pmatrix} \dd L^+&0\\0&\dd L^-\end{pmatrix}\begin{pmatrix}0&\dd\tmfb\\\dd\tmfb^*&0\end{pmatrix}+\begin{pmatrix}0&\dd\tmfb\\\dd\tmfb^*&0\end{pmatrix}\begin{pmatrix} \dd L^{+*}&0\\0&\dd L^{-*}\end{pmatrix}\right)\\
&\hspace{-1cm}=-\frac{1}{\beta(N-1)+2}\begin{pmatrix} 0&\frac{2\lambda_\nu-1}{2\sqrt{\lambda_\nu^2-\lambda_\nu}}+\beta\sum_{\rho\neq\nu}\frac{\sqrt{\lambda_\nu^2-\lambda_\nu}}{\lambda_\nu-\lambda_\rho}\\\frac{2\lambda_\nu-1}{2\sqrt{\lambda_\nu^2-\lambda_\nu}}+\beta\sum_{\rho\neq\nu}\frac{\sqrt{\lambda_\nu^2-\lambda_\nu}}{\lambda_\nu-\lambda_\rho}&0\end{pmatrix}\dd s.
\end{split}
\end{equation}
The matrix in the last line consists of four diagonal blocks again, and we have only given the $\nu$-th diagonal elements.

So all terms in (\ref{sofar}) cancel except for one, and we have on $A_\eps$, for $s\geq \eps$
\begin{equation}
\label{DMPKonA}
 \dd \caM^\eps(s)=\caU^\eps(s)\dd\tilde{\caL}(s)\caS^\eps(s)\caV^\eps(s)=\caU^\eps(s)\dd\tilde{\caL}(s)\caU^{\eps*}(s)\caM^\eps(s).
\end{equation}
Define the process $\caL^\eps$ by
\begin{equation}
 \begin{split}
  \caL^\eps(s;\omega)&=\caL(s;\omega)\hspace{3.5cm}\mbox{for }0\leq s\leq \eps\mbox{ or }\omega\notin A_\eps\\
\caL^\eps(s)&=\caL(\eps)+\int_\eps^s\caU^\eps(t)\dd\tilde{\caL}(t)\caU^{\eps*}(t)\hspace{1cm}\mbox{otherwise. }
 \end{split}
\end{equation}
The stochastic integral in the second line is well-defined by the construction we have followed so far, so $\caL^\eps$ is a well-defined continuous local martingale, and in view of (\ref{DMPKonA}) we have on $\Omega$ and for all times $s\geq0$
\begin{equation}
 \dd\caM^\eps(s)=\dd\caL^\eps(s)\caM^\eps(s).
\end{equation}
To verify that the process $\caM^\eps$ has the same law in the space of continuous functions as $\caM$, it is enough to show
\begin{equation}
 \left(\caL^\eps(s)\right)_{s\geq0}\stackrel{d}{=}\left(\caL(s)\right)_{s\geq0}.
\end{equation}
For $s\leq \eps$, we even have $\caL^\eps(s)=\caL(s)$, and for $s\geq\eps$, we need to prove that the paths of
\begin{equation}
 \caL^\eps(s)-\caL^\eps(\eps)=1(A_\eps)\int_\eps^s\caU^\eps(t)\dd\tilde{\caL}(t)\caU^{\eps*}(t)+1(A_\eps^c)(\caL(s)-\caL(\eps))
\end{equation}
are distributed as $\caL(s)-\caL(\eps)$, and independent of $\left(\caL^\eps(\tau)\right)_{\tau\leq\eps}$. For the first statement, it is enough to notice that the bracket processes of $\caL^\eps$ and $\caL$ coincide by the invariance property
\begin{equation}
 \label{incrLinvar}
\caU^\eps(t)\left(\tilde{\caL}(t+\delta)-\tilde{\caL}(t)\right)\caU^{\eps*}(t)\stackrel{d}{=}\tilde{\caL}(t+\delta)-\tilde{\caL}(t)
\end{equation}
for all $\delta>0$, which follows from the distribution of $\tmfa_+,\tmfa_-,\tmfb$.
For the independence, it is important to note that $\caU^\eps(t)$ do depend on $\left(\caL^\eps(\tau)\right)_{\tau\leq\eps}$ by their initial condition, but this dependence factors out, again by (\ref{incrLinvar}). So finally we have
\begin{equation}
 \left(\caM^\eps(s)\right)_{s\geq0}\stackrel{d}{=}\left(\caM(s)\right)_{s\geq0}.
\end{equation}
The process $T^\eps$ is by construction the transmission eigenvalue process of $\caM^\eps$, and thus
\begin{equation}
\label{Teps=TM}
 \left({T}^{\eps}(s)\right)_{s\geq0}\stackrel{d}{=}\left(T\upm(s)\right)_{s\geq0},
\end{equation}
on path space. $T\upm$ and $T^\eps$ are almost surely continuous processes, as $\caM$ and $\caM^\eps$ are and singular values of a (sub)matrix depend continuously on the matrix. As the ``largest part'' of $T^\eps$ is already defined as a solution of the DMPK equation, we are now in a position to compare the distributions of $T\upm$ and $T$, and thus to prove Theorem \ref{thm_dmpkdist}.
\begin{proof}
We will prove that the distribution of $\left({T}^\eps(s)\right)_{s\geq0}$ on $\left(C_{\bbR^N}[0,\infty),\caB\left(C_{\bbR^N}[0,\infty)\right)\right)$ converges to that of $\left({T}(s)\right)_{s\geq0}$, as $\eps\rightarrow0$. As we know that all $T^\eps$ are equal to $T\upm$ in distribution on path space, this can only mean that the distributions of $\left({T}\upm(s)\right)_{s\geq0}$ and $\left({T}(s)\right)_{s\geq0}$ already coincide, which proves Theorem \ref{thm_dmpkdist}.

\noindent{}From the initial condition for $T$, its boundedness and its continuity, we directly have 
\begin{equation}
 \mean{\sup_{0\leq s\leq \eps}\sum_{k=1}^N\left|1-T_k(s)\right|}\rightarrow0\hspace{1cm}(\eps\rightarrow0),
\end{equation}
and, recalling that $T^\eps(s)=T\upm(s)$ for $s\leq\eps$, and thus $\eps$-independent, bounded, and continuous, also
\begin{equation}
\label{Tepssmalltimes}
 \mean{\sup_{0\leq s\leq \eps}\sum_{k=1}^N\left|1-T^\eps_k(s)\right|}\rightarrow0\hspace{1cm}(\eps\rightarrow0).
\end{equation}
Thus we already know
\begin{equation}
\label{dist1}
 \mean{\sup_{0\leq s\leq \eps}\sum_{k=1}^N\left|T_k(s)-T^\eps_k(s)\right|}\rightarrow0\hspace{1cm}(\eps\rightarrow0).
\end{equation}
 For a given $\eps>0$, define the auxiliary process $\left(\tilde{T}^\eps(s)\right)_{s\geq\eps}$ as the unique strong solution of a DMPK equation driven by $B_k(s)$, $k=1,...,N$, $s\geq\eps$, starting from $\tilde{T}^\eps_k(\eps)=1$  for all $k$, which exists by Theorem \ref{thmstartatone}. On the other hand, for any $\eps>0$, $T^\eps(\eps)\in D_N$ on the event $A_\eps$, so we have the ordering $\tilde{T}^\eps_k(s)>T^\eps(s)$ for all $s\geq\eps$ on $A_\eps$ by Lemma \ref{lmaconserveorder} (here and in the following we ignore the singular initial condition for $\tilde{T}^\eps$, as we have seen in the proof of Theorem \ref{thmstartatone} how to approximate $\tilde{T}^\eps$ from below). Thus on $A_\eps$, we have 
\begin{equation}
 u_\eps(s)=\sum_{k=1}^N\left|\tilde{T}^\eps_k(s)-T^\eps_k(s)\right|=\sum_{k=1}^N\left(\tilde{T}^\eps_k(s)-T^\eps_k(s)\right)
\end{equation}
for all $s\geq0$, and can argue as at the end of the proof of Theorem \ref{thmstartatone}, that on event $A_\eps$, $u_\eps$ is a solution of
\begin{equation}
 u_\eps(s)=u_\eps(\eps)+\int_\eps^sf\left(\tilde{T}^\eps_k(t),T^\eps_k(t)\right)\dd t+\int_\eps^s\sigma\left(\tilde{T}^\eps_k(t),T^\eps_k(t)\right)\dd B^\eps(t)
\end{equation}
with a Brownian motion $B^\eps$ and 
\begin{equation}
\begin{split}
 \left|f\left(\tilde{T}^\eps_k(t),T^\eps_k(t)\right)\right|&\leq C u_\eps(t),\\
\left|\sigma\left(\tilde{T}^\eps_k(t),T^\eps_k(t)\right)\right|&\leq \tilde{C} \sqrt{u_\eps(t)},
\end{split}
\end{equation}
with $C, \tilde{C}$ only depending on $N$.
Thus we have again
\begin{equation}
 \mean{u_\eps(s)\cdot1(A_\eps)}\leq \mean{u_\eps(\eps)\cdot1(A_\eps)}\exp(C(s-\eps)).
\end{equation}
Fixing an $t>\eps$, with an application of the Doob inequality
\begin{equation}
\begin{split}
 &\mean{\sup_{s\in[\eps,t]}\left(1(A_\eps)\cdot\int_\eps^s\sigma\left(\tilde{T}^\eps_k(\tau),T^\eps_k(\tau)\right)\dd B^\eps(\tau)\right)^2}\\
&\hspace{1cm}\leq4\mean{\left(1(A_\eps)\cdot\int_\eps^t\sigma\left(\tilde{T}^\eps_k(\tau),T^\eps_k(\tau)\right)\dd B^\eps(\tau)\right)^2}\\
&\hspace{1cm}\leq4\tilde{C}^2\mean{1(A_\eps)\cdot\int_\eps^tu_\eps(\tau)\dd \tau}\\
&\hspace{1cm}\leq4\tilde{C}^2\mean{1(A_\eps)u_\eps(\eps)}\int_\eps^t\exp(C(\tau-\eps))\dd \tau\\
&\hspace{1cm}\leq\frac{4\tilde{C}^2}{C}e^{Ct}\mean{1(A_\eps)u_\eps(\eps)}
\end{split}
\end{equation}
and the estimate for $f$, we even find a constant $K(t,N)$ such that
\begin{equation}
 \mean{\sup_{s\in[\eps,t]}u_\eps(s)\cdot 1(A_\eps)}\leq K(t,N)\left(\mean{1(A_\eps)\cdot u_\eps(\eps)}+\sqrt{\mean{1(A_\eps)\cdot u_\eps(\eps)}}\right).
\end{equation}
Together with Lemma \ref{lma_TMinDN}, and our control of $u_\eps(\eps)$ from (\ref{Tepssmalltimes}),
\begin{equation}
\label{dist2}
 \mean{\sup_{s\in[\eps,t]}\sum_{k=1}^N\left|\tilde{T}^\eps_k(s)-T^\eps_k(s)\right|}\leq K(t,N)\left(\mean{ u_\eps(\eps)}+\sqrt{\mean{ u_\eps(\eps)}}\right)+N(1-\prob{A_\eps})\rightarrow0
\end{equation}
as $\eps\rightarrow0$ for any fixed $t$. By a similar argument, (except that we do not have to control a possible ill-behaved event $A_\eps^c$),
\begin{equation}
\label{dist3}
\mean{\sup_{s\in[\eps,t]}\sum_{k=1}^N\left|\tilde{T}^\eps_k(s)-T_k(s)\right|}\rightarrow0
\end{equation}
as $\eps\rightarrow0$, $t$ fixed. But the combination of (\ref{dist1}), (\ref{dist2}), (\ref{dist3}) is equivalent to 
\begin{equation}
 \mean{\mathrm{dist}\left(T^\eps,T\right)}\rightarrow0
\end{equation}
as $\eps\rightarrow0$, with $\mathrm{dist}$ denoting the metric of local uniform convergence on $C_{\bbR^N}[0,\infty)$. A fortiori, $T^\eps$ converges to $T$ in distribution on $C_{\bbR^N}[0,\infty)$.
\end{proof}

\subsection{General stochastic processes with Coulomb repulsion}
In general, we expect the following to be true for stochastic processes with Coulomb repulsion.

Let $N\in\mathbb{N}$, and choose bounded $c_{kl}:\mathbb{R}^N\rightarrow \mathbb{R}$ twice differentiable with bounded derivatives, $v:\mathbb{R}^N\rightarrow\mathbb{R}^N$ Lipschitz, $D_k:\mathbb{R}^N\rightarrow \mathbb{R}$ Lipschitz for all $k,l=1,...,N$. Moreover, let the following symmetry conditions be satisfied:
\begin{equation}
\label{ext_sym}
\begin{split}
c_{\sigma(k)\sigma(l)}(x_1,...,x_N)&=c_{kl}\left(x_{\sigma(1)},...x_{\sigma(N)}\right)\\
v_{\sigma(k)}(x_1,...,x_N)&=v\left(x_{\sigma(1)},...x_{\sigma(N)}\right)
\end{split}
\end{equation}
for all permutations $\sigma$ and all $x\in\mathbb{R}^N$, and assume
\begin{equation}
\label{ext_repulsion}
c_{kl}(x)+c_{lk}(x)\geq\frac{1}{2}\left(D_k^2(x)+D_l^2(x)\right)
\end{equation}
for all $k,l=1,...,N$, and all $x\in\mathbb{R}^N$ with $|x_k-x_l|$ sufficiently small.

Then, for any initial data $x_1(0)<...< x_N(0)$, the stochastic differential equation 
\begin{equation}
\label{ext_sde}
{\dd}x_k=D_k(x)\dd B_k+v_k(x)\dd t+\sum_{l\neq k}\frac{c_{kl}(x)}{x_k-x_l}\dd t
\end{equation}
has a unique strong solution $\left(X_t\right)_{t\geq0}$, with $x_1(t)<...< x_N(t)$ for all $t\geq0$ almost surely.

If, even more, $c_{kl}(x)>0$ for all $k,l,x$, one can also start from singular intial conditions $x_1(0)\leq...\leq x_N(0)$ and still have a unique continuous stochastic process $\left(X_t\right)_{t\geq0}$ such that for all positive $t$ the particles are strictly ordered $x_1(t)<...< x_N(t)$, and $\left(X_t\right)_{t>0}$ is a strong solution to (\ref{ext_sde}).

A short comment on the assumptions is in order. While (\ref{ext_repulsion}) makes sure that the diffusion is controlled by level repulsion at all times, and can essentially not be relaxed without allowing for collisions, (\ref{ext_sym}) is just one of many ways to make sure that the "ODE part" of (\ref{ext_sde}) does not allow for intersecting trajectories. We choose to write down these assumptions as symmetries, as those should arise quite naturally whenever the $x_k$ are eigenvalues of a matrix-valued process with invariance properties similar to (\ref{eq_dLinvar}).
\section*{Acknowledgements}
It is a pleasure to thank Herbert Spohn for important advice at all stages of writing this paper. I also benefited a lot from discussions with members of Antti Kupiainen's group at Helsinki University, and I gratefully acknowledge financial support by the Academy of Finland during my stay there.

\end{document}